\newcommand{\mt}[1]{\mathtt{#1}}
\newcommand{\mbM}{\mathbb{M}}
\newcommand{\re}{\mathbb{R}}
\newcommand{\cpx}{\mathbb{C}}
\newcommand{\N}{\mathbb{N}}
\newcommand{\half}{\frac{1}{2}}
\newcommand{\lmd}{\lambda}
\newcommand{\eps}{\epsilon}
\newcommand{\dt}{\delta}
\newcommand{\Dt}{\Delta}
\def\af{\alpha}
\def\bt{\beta}
\def\rank{\mbox{rank}}
\def\vec{\mathbf{vec}}
\newcommand{\sig}{\sigma}
\newcommand{\Sig}{\Sigma}
\newcommand{\reff}[1]{(\ref{#1})}
\newcommand{\mc}[1]{\mathcal{#1}}
\newcommand{\supp}[1]{\mbox{supp}(#1)}
\newcommand{\bdes}{\begin{description}}
\newcommand{\edes}{\end{description}}
\newcommand{\bal}{\begin{align}}
\newcommand{\eal}{\end{align}}
\newcommand{\bnum}{\begin{enumerate}}
\newcommand{\enum}{\end{enumerate}}
\newcommand{\bit}{\begin{itemize}}
\newcommand{\eit}{\end{itemize}}
\newcommand{\bea}{\begin{eqnarray}}
\newcommand{\eea}{\end{eqnarray}}
\newcommand{\be}{\begin{equation}}
\newcommand{\ee}{\end{equation}}
\newcommand{\baray}{\begin{array}}
\newcommand{\earay}{\end{array}}
\newcommand{\bsry}{\begin{subarray}}
\newcommand{\esry}{\end{subarray}}
\newcommand{\bca}{\begin{cases}}
\newcommand{\eca}{\end{cases}}
\newcommand{\bcen}{\begin{center}}
\newcommand{\ecen}{\end{center}}
\newcommand{\bbm}{\begin{bmatrix}}
\newcommand{\ebm}{\end{bmatrix}}
\newcommand{\bmx}{\begin{matrix}}
\newcommand{\emx}{\end{matrix}}
\newcommand{\bpm}{\begin{pmatrix}}
\newcommand{\epm}{\end{pmatrix}}
\newcommand{\btab}{\begin{tabular}}
\newcommand{\etab}{\end{tabular}}
\newtheorem{theorem}{Theorem}[section]
\newtheorem{prop}[theorem]{Proposition}
\theoremstyle{definition}
\newtheorem{example}[theorem]{Example}
\newtheorem{alg}[theorem]{Algorithm}
\begin{document}

\title{Positive Maps and Separable Matrices}

\author{Jiawang Nie}
\address{
Department of Mathematics,  University of California San Diego,  9500
Gilman Drive,  La Jolla,  California 92093,  USA.
} \email{njw@math.ucsd.edu}

\author{Xinzhen Zhang}
\address{
Department of Mathematics, School of Science, Tianjin University, Tianjin 300072, China.
} \email{xzzhang@tju.edu.cn}

\begin{abstract}
A linear map between real symmetric matrix spaces
is {\it positive} if all positive semidefinite matrices are
mapped to positive semidefinite ones.
A real symmetric matrix is {\it separable} if
it can be written as a summation of Kronecker products
of positive semidefinite matrices.
This paper studies how to check if a linear map is positive or not
and  how to check if a matrix is separable or not.
We propose numerical algorithms,
based on Lasserre's type of semidefinite relaxations,
for solving such questions. To check the positivity of a linear map,
we construct a hierarchy of semidefinite relaxations for minimizing
the associated bi-quadratic form over the unit spheres.
We show that the positivity can be detected by solving
a finite number of such semidefinite relaxations.
To check the separability of a matrix,
we construct a hierarchy of semidefinite relaxations.
If it is not separable, we can get a mathematical certificate for that;
if it is, we can get a decomposition for the separability.
\end{abstract}

\keywords{positive map, separable matrix, bi-quadratic optimization,
S-decomposition, semidefinite relaxation}

\subjclass[2010]{15B48, 65K05, 90C22}

\maketitle

\section{Introduction}

For an integer $k>0$, denote by $\mc{S}^k$
the space of $k\times k$ real symmetric matrices,
and denote by $\mc{S}_+^k$
the cone of $k\times k$ real symmetric positive semidefinite matrices.
For $X \in \mc{S}^k$,
by $X \succeq 0$ we mean that $X \in \mc{S}_+^k$.

\subsection{Positive maps}
\label{ssc:posmaps}

Let $p, q$ be positive integers. A linear map
\[
\Phi: \, \mc{S}^p \, \to \,  \mc{S}^q
\]
is said to be {\it positive} if
$\Phi(X) \in \mc{S}_+^q$ for all $X \in \mc{S}_+^p$.
An important problem in applications is
checking whether or not a linear map is positive.
It is well-known that checking positivity of linear maps
is equivalent to detecting nonnegativity of bi-quadratic forms
%
%
(cf.~Choi~\cite{C75}).
As in \cite{C75}, one can show that $\Phi$ is a positive map if and only if
\[
B(x,y) \, := \, y^T \Phi( xx^T ) y \geq 0 \quad
\forall x \in \re^p, \, y \in \re^q.
\]
The above $B(x,y)$ is a bi-quadratic form in two groups of variables
\[
x :=(x_1,\ldots, x_p), \quad y:=(y_1, \ldots, y_q).
\]
Let $E_{ik}$ be the symmetric matrix in $\mc{S}^{p}$
whose $(i,k)$th and $(k,i)$th entries equal to one and
all other entries are zeros. Denote
\be \label{df:Omg}
\Omega:= \{ (i,j,k,l): \,  1\leq i \leq k \leq p,\, 1 \leq j \leq l \leq q \}.
\ee
Then, we can expand $B(x,y)$ as a polynomial in $(x,y)$:
\be \label{nt:yPhi(xx)y}
y^T \Phi( xx^T ) y =
y^T \Big( \sum_{ 1\leq i \leq k \leq p}  x_ix_k \Phi(E_{ik})  \Big) y =
\sum_{ (i,j,k,l) \in \Omega } b_{ijkl} x_i y_j x_ky_l,
\ee
where each $b_{ijkl} = \big( \Phi(E_{ik})  \big)_{jl}
+ \big( \Phi(E_{ik})  \big)_{lj}$.
The coefficients $b_{ijkl}$ are uniquely determined by the linear map $\Phi$,
i.e., $\Phi$ uniquely determines the array
\be \label{B=bijkl:E}
\mc{B} =(b_{ijkl})_{ (i,j,k,l) \in \Omega },
\ee
and vice versa. The array $\mc{B}$ can be thought of as a vector
in the space $\re^{\Omega}$. Denote by $\mathscr{P}^{p,q}$
the set of all positive linear maps from $\mc{S}^p$ to $\mc{S}^q$.
The set $\mathscr{P}^{p,q}$ is a closed convex cone,
which can be implied by Proposition~3.2 of \cite{linmomopt}.
A goal of this paper is to check
the membership in $\mathscr{P}^{p,q}$.
This question is related to bi-quadratic optimization,
which was studied in Ling et al.~\cite{LNQY09}.
Recently, Kellner~et al.~\cite{KTT15} have important work on positive maps and
the set containment problem, and
have proposed semidefinite relaxation methods.

Positive maps have applications in Mechanics.
In elasticity theory, an elasticity tensor can be represented
by an array $\mathcal{B}$ as in \reff{nt:yPhi(xx)y},
which determines the linear map $\Phi$ as in \reff{nt:yPhi(xx)y}.
It is said to satisfy the Legendre-Hadamard condition \cite{C91} if
\[
B(x,y)  \geq 0 \quad
\forall \, x \in \re^p, \, \forall \, y \in \re^q.
\]
Such map is also said to be elliptic.
Moreover, the elasticity tensor is said to be strongly elliptic if
$B(x,y)>0$ for all $x \ne 0$ and $y \ne 0$. Clearly,
the Legendre-Hadamard condition is satisfied if and only if
the associated linear map is positive.
Similarly, it is strongly elliptic if and only if $B(x,y)$ is strictly positive
on the unit spheres $\|x\|_2 = \|y\|_2=1$
($\| \cdot \|_2$ denotes the standard $2$-norm.)
The Legendre-Hadamard condition and strong ellipticity
play important roles in elasticity theory. We refer to \cite{A83,BB02,C91}
and the references therein.
%
%

\subsection{Separable matrices}
\label{ssc:sepmat}

The cone dual to the positive map cone $\mathscr{P}^{p,q}$ also has important applications.
It is the cone of so-called separable matrices.
For two matrices $B \in \mc{S}^p$ and $C \in \mc{S}^q$,
$B \otimes C$ denotes their Kronecker product,
i.e., $B \otimes C$ is the block matrix
\[
B \otimes C :=  \big( B_{ik} C \big)_{1 \leq i, k \leq p}.
\]
Let $\mathscr{K}^{p,q}$ be the subspace
spanned by all such Kronecker products:
\be \label{matspa:Kpq}
\mathscr{K}^{p,q} = \mbox{span} \left \{
B \otimes C : \,  B \in \mc{S}^p, \, C \in \mc{S}^q
\right\} .
\ee
%
%
The set $\mathscr{K}^{p,q}$ is a proper subspace of $\mc{S}^{pq}$.
Its dimension is not $p^2q^2(p^2q^2+1)/2$, but instead
\[
\dim  \mathscr{K}^{p,q} =  \frac{1}{4} p(p+1)q(q+1).
\]
Each $A \in  \mathscr{K}^{pq}$ is uniquely determined
by the array
\[
\mc{A} = (a_{ijkl})_{ (i,j,k,l) \in \Omega} \in \re^{\Omega},
\]
in the way that
\be \label{Aindx=aijkl}
A_{(i-1)q + j, (k-1)q+ l} =  a_{ijkl} \quad
\forall\, (i,j,k,l) \in \Omega.
\ee
As in Dahl et al.~\cite{DLMO07}, a matrix $A \in  \mathscr{K}^{pq}$
is said to be {\it separable} if
there exists $B_j \in \mc{S}_+^p, C_j \in \mc{S}_+^q$
($j=1,\ldots, L$) such that
\be \label{A=sum:BotimC}
A  =  B_1 \otimes C_1  + \cdots +  B_L \otimes C_L.
\ee
The equation \reff{A=sum:BotimC} is called an
{\it S-decomposition} of $A$.
Let $\mathscr{S}^{p,q}$ be the cone of all such separable matrices:
\be \label{cone:sepmat}
\mathscr{S}^{p,q} := \Big\{
\sum_{j=1}^L   B_j \otimes C_j : \,
\mbox{ each } B_j \in \mc{S}_+^p, C_j \in \mc{S}_+^q, L \in \N
\Big\}.
\ee
The cones $\mathscr{S}^{p,q}$ and $\mathscr{P}^{p,q}$
are dual to each other (cf.~Prop.~\ref{prop:cone:posep}).
%
%

In quantum information theory, an important problem is to check
if a quantum system is separable or entangled (cf.~\cite{DLMO07}).
A quantum system can be represented by a density matrix,
which is positive semidefinite and has trace one. Thus, a quantum system
is separable (resp., entangled) if its density matrix
is separable (resp., not separable).
%
%
Checking whether or not a density matrix is separable
needs to detect the separability/entanglement. To do this,
approximation methods were proposed in \cite{DLMO07,QDH09},
by solving a sequence of bi-quadratic optimization problems.
Typically, it is difficult to check separability.
Indeed, the weak membership problem for separable matrices
is NP-hard, as shown by Gurvits~\cite{Gurv}.

\subsection{Contributions}

In this paper, we propose new methods for checking positive maps and
separable matrices.

Checking positivity of a linear map $\Phi$ is equivalent to
checking nonnegativity of the associated bi-quadratic form $B(x,y)$.
So, Lasserre's hierarchy of semidefinite relaxations (cf.~\cite{Las01})
can be applied to solve the question.
Under some optimality conditions,
Lasserre's hierarchy was proved to have finite convergence (cf.~\cite{Nie-opcd}).
For convex polynomial optimization, Lasserre's hierarchy
also has finite convergence, under the strict convexity or sos-convexity assumption
(cf.~\cite{dKlLau11,Las09}). An improvement of Lasserre type relaxations
is proposed in \cite{LTY15}.
For checking positive maps, a sufficient criteria
was given in \cite{KTT15}; some convex relaxations were proposed in \cite{LNQY09}.
Such earlier existing relaxations may not be tight
for checking positivity of some linear maps. For this reason,
this paper proposes a new hierarchy of
semidefinite relaxations (cf.~\S\ref{sc:pm}).
We prove the following property for it:
for {\it every} linear map, its positivity
can be detected by solving a finite number of semidefinite relaxations
contained in this new hierarchy.
For checking positivity of linear maps,
this is the first type of semidefinite relaxations
possessing the aforementioned property,
to the best of the authors' knowledge.

Checking separability of a matrix is equivalent to checking
whether or not it has an S-decomposition as in \reff{A=sum:BotimC}.
For recent work on entanglement or separability,
we refer to \cite{DLMO07,DPS04,Gurv,GB02}.
%
%
Most earlier existing work can detect inseparability
if the matrix is not separable. However, if the matrix is separable,
these work usually cannot detect the separability,
because an S-decomposition is often lacking. To check separability,
we show that the question is equivalent to
a truncated moment problem with special structures.
To solve it, we construct a hierarchy of semidefinite relaxations.
If the matrix is not separable, we can get a certificate for that.
If it is, we can get an S-decomposition.
To the best of the authors' knowledge, this is the first work
that possesses this property.

The paper is organized as follows. Section~\ref{sc:prlm} presents
some preliminaries in the field of polynomial optimization, moments,
and duality of positive maps and separable matrices.
Section~\ref{sc:pm} discusses how to check if a map is positive or not.
Section~\ref{sc:sep} discusses how to check whether a matrix is separable or not.
Last, we present some numerical examples in Section~\ref{sc:nmexp}.

\section{Preliminaries}
\label{sc:prlm}
\setcounter{equation}{0}

\noindent
{\bf Notation}
The symbol $\N$ (resp., $\re$, $\cpx$) denotes the set of
nonnegative integral (resp., real, complex) numbers.
Let $p,q$ be positive integers.
Denote the variables
\[
x:=(x_1,\ldots,x_p), \quad y:=(y_1,\ldots,y_q).
\]
Denote the $p$-dimensional vector of all ones by $\mathbf{1}_p$.
For convenience, denote
\[
(x,y) = (x_1,\ldots,x_p, \, y_1,\ldots,y_q).
\]
Let $\mbM[x,y]$ be the set of all monomials in $(x,y)$ and
\[
\re[x,y] := \re[x_1,\ldots,x_p,y_1,\ldots,y_q]
\]
be the ring of real polynomials in $(x,y)$.
For $d>0$, $\mbM[x,y]_d$ (resp., $\re[x,y]_d$) denotes
the set of all monomials (resp., polynomials) with degrees at most $d$.
For a set $F \subseteq \re[x,y]$ and a pair $(u,v) \in \re^p \times \re^q$,
the notation
\[
[(u,v)]_{F}
\]
denotes the vector of all polynomials in $F$ evaluated at the point $(u,v)$.
In particular, denote
\be \label{nt:[(u,v)]d}
[(u,v)]_{d} := [(u,v)]_{\mbM[x,y]_d}.
\ee
%
%
It can be counted that the dimension of the vector $[(u,v)]_d$ is
$\binom{p+q+d}{d}$, the cardinality of the set $\mbM[x,y]_d$.
For $t$, $\lceil t \rceil$ denotes
the smallest integer that is greater than or equal to $t$.
%
%
%

\subsection{Sum of squares and positive polynomials}

Let $h:=(h_1,\ldots,h_s)$ be a tuple of $s$ polynomials in $\re[x,y]$.
Denote by $I(h)$ the ideal generated by $h$:
\[
I(h) = h_1 \cdot \re[x,y] + \cdots + h_s  \cdot \re[x,y].
\]
In practice, we need to work with a finitely dimensional
subspace in $I(h)$. We denote the
$N$-th {\it truncation} of $I(h)$ as
\be \label{df:<h>k}
I_{N}(h) :=
h_1 \cdot \re[x,y]_{N-\deg(h_1)} + \cdots + h_s  \cdot \re[x,y]_{N-\deg(h_s)}.
\ee
A polynomial $\sig$ is said to be sum of squares (SOS)
if $\sig = f_1^2+\cdots+ f_k^2$ for some real polynomials $f_1,\ldots, f_k$.
The set of all SOS polynomials in $(x,y)$ is denoted as $\Sig[x,y]$.
For a degree $D$, denote the truncation
\[
\Sig[x,y]_D := \Sig[x,y] \cap \re[x,y]_D.
\]
It is a closed convex cone for all even $D>0$.
The symbol $int(\Sig[x,y]_D)$ denotes the interior of $\Sig[x,y]_D$.
For a tuple $g:=(g_1,\ldots,g_t)$ of polynomials in $\re[x,y]$,
the {\it quadratic module} generated by $g$ is the set
\be \label{df:Q(g)}
Q(g):=  \Sig[x,y] + g_1 \cdot \Sig[x,y] + \cdots + g_t \cdot \Sig[x,y].
\ee
The $k$-th truncation of $Q(g)$ is the set
\be \label{Qk(g):def}
Q_k(g):=
\Sig[x,y]_{2k} + g_1 \cdot \Sig[x,y]_{2k - \deg(g_1)}
+ \cdots + g_t \cdot \Sig[x,y]_{2k - \deg(g_t)}.
\ee

Let $h$ and $g$ be the polynomial tuples as above. Consider the set
\be \label{df:S(hg)}
S = \{ (u,v) \in \re^p \times \re^q: \, h(u,v) = 0, \, g(u,v) \geq 0\}.
\ee
Clearly, if $f \in I(h) + Q(g)$,
then $f$ is nonnegative on $S$.
Interestingly, the reverse is also true under some general conditions.
The set $I(h) + Q(g)$ is called {\it archimedean}
if there exists $\phi \in I(h) + Q(g)$ such that
$\phi(x,y) \geq 0$ defines a compact set in the space $\re^p \times \re^q$.
When $I(h) + Q(g)$ is archimedean, Putinar \cite{Put} proved that
if $f \in \re[x,y]$ is positive on $S$ then $f  \in I(h) + Q(g)$.
Moreover, as shown recently in \cite{Nie-opcd},
if $f$ is nonnegative on $S$
and satisfies some general optimality conditions,
then we also have $f  \in I(h) + Q(g)$.
We refer to Lasserre's book \cite{LasBok}
and Laurent's survey \cite{Lau},
for additional information on polynomial optimization.

\subsection{Truncated moment problems}

Let $\re^{\mbM[x,y]_d}$ be the space of vectors indexed
by monomials in the set $\mbM[x,y]_d$.
A vector in $\re^{\mbM[x,y]_d}$ is called a
{\it truncated multi-sequence} (tms) of degree $d$.
For a tms $w \in \re^{\mbM[x,y]_d}$, we can index it as
\[
w = ( w_{ x^\af y^{\bt} } ) _{ x^\af y^{\bt} \in \mbM[x,y]_d} .
\]
Define the scalar product between $\re[x,y]_d$ and $\re^{\mbM[x,y]_d}$ such that
\be \label{df:<f,w>}
\Big \langle \sum_{ |\af| + |\bt| \leq d  }
c_{\af,\bt} x^\af y^{\bt} , w  \Big \rangle :=
\sum_{ |\af| + |\bt| \leq d  }
c_{\af,\bt} w_{ x^\af y^{\bt} },
\ee
where $c_{\af,\bt}$ are the coefficients.
%
%
The tms $w$ is said to admit a representing measure
whose support is contained in a set
$T$ if there exists a Borel measure $\mu$
supported in $T$ (i.e., $\supp{\mu} \subseteq T$) such that
\[
w_a = \int  a \, \mt{d} \mu
\quad \forall \, a \in \mbM[x,y]_d.
\]
If so, such $\mu$ is called a $T$-representing measure for $w$
and we say that $w$ admits the measure $\mu$.
An interesting question is how to check whether a tms
admits a $T$-representing measure or not.
The method in \cite{Nie-ATKMP} can be applied to do this.
Note that this problem is not polynomial optimization.
The classical Lasserre's relaxations in \cite{Las01} for polynomial optimization
is not very suitable for solving the question.

Let $\theta \in \re[x,y]_{2k}$ with $\deg(\theta) \leq 2k$.
The $k$-th {\it localizing matrix} of $\theta$,
generated by $w\in \re^{\mbM[x,y]_{2k}}$,
is the symmetric matrix $L_{\theta}^{(k)}(w)$ satisfying
(see \reff{df:<f,w>} for $\langle, \rangle$)
\[
vec(f_1)^T \Big( L_{\theta}^{(k)}(w) \Big) vec(f_2)
= \langle \theta f_1 f_2, w \rangle
\]
for all $f_1, f_2 \in \re[x,y]$ with
\[ \deg(f_1), \deg(f_2) \leq k - \lceil \deg(\theta)/2 \rceil.\]
In the above, $vec(f_i)$ denotes the coefficient vector of the polynomial $f_i$.
When $\theta = 1$ (the constant polynomial $1$),
$L_1^{(k)}(w)$ is called a {\it moment matrix} and is denoted as
\be \label{df:Mk(w)}
M_k(w):= L_{1}^{(k)}(w).
\ee
The columns and rows of $L_{\theta}^{(k)}(w)$, as well as $M_k(w)$,
are indexed by monomials $a \in \mbM[x,y]$ with $ \deg(\theta a^2) \leq 2k$.

Let $S$ be as in \reff{df:S(hg)}.
If $w$ admits an $S$-representing measure,
then (cf.~\cite{CuFi05,Nie-ATKMP})
\be \label{Lh=0Lg>=0}
L_{h_i}^{(k)}(w) = 0 \, (1 \leq i \leq s), \quad
L_{g_j}^{(k)}(w) \succeq 0 \, (1 \leq j \leq t), \quad
M_k(w)\succeq 0.
\ee
The reverse is typically not true. For convenience, denote
\be \label{df:L(h):L(g)}
\left\{ \baray{l}
L_h^{(k)}(w) = \Big(L_{h_1}^{(k)}(w), \ldots, L_{h_s}^{(k)}(w) \Big),  \\
L_g^{(k)}(w) = \Big(L_{g_1}^{(k)}(w), \ldots, L_{g_t}^{(k)}(w) \Big).
\earay \right.
\ee
In the above, $\mbox{diag}(X_1, \ldots, X_r)$ denotes the block diagonal matrix
whose diagonal blocks are $X_1, \ldots, X_r$.\
Let
$
d_0 = \max\, \{1,  \lceil \deg(h) /2 \rceil,  \lceil \deg(g) /2 \rceil  \}.
$
If $w$ satisfies \reff{Lh=0Lg>=0} and
\be \label{con:fec}
\rank \, M_{k-d_0}(w) \, = \, \rank \, M_k(w),
\ee
then $w$ admits an $S$-representing measure (cf.~\cite{CuFi05,Nie-ATKMP}).
When \reff{Lh=0Lg>=0} and \reff{con:fec} hold,
the tms $w$ admits a unique representing measure $\mu$ on $\re^n$;
moreover, the measure $\mu$ is supported on $r:=\rank\, M_k(w)$ distinct points in $S$.
The points in $\supp{\mu}$ can be found by
solving some eigenvalue problems~\cite{HenLas05}.
For convenience, we say that $w$ is {\it flat with respect to}
$h=0$ and $g\geq 0$
if \reff{Lh=0Lg>=0} and \reff{con:fec} are {\it both} satisfied.

For two tms' $w \in \re^{ \mbM[x,y]_{2k} }$ and
$z \in \re^{ \mbM[x,y]_{2l} }$ with $k<l$,
we say that $w$ is a truncation of $z$,
or equivalently, $z$ is an extension of $w$,
if $w_a = z_a$ for all $a \in \mbM[x,y]_{2k}$.
Denote by $z|_d$ the subvector of $z$
whose entries are indexed by $a \in \mbM[x,y]_d$.
Thus, $w$ is a truncation of $z$ if $z|_{2k} = w$.
Throughout the paper, if $z|_{2k} = w$ and $w$ is flat,
we say that $w$ is a flat truncation of $z$.
Similarly, if $z|_{2k} = w$ and $z$ is flat,
we say that $z$ is a flat extension of $w$.
Flat extensions and flat truncations are proper criteria
for checking convergence of Lasserre's hierarchies
in polynomial optimization (cf.~\cite{Nie-ft}).

\subsection{Properties of $\mathscr{P}^{p,q}$ and $\mathscr{S}^{p,q}$ }

The positive map cone $\mathscr{P}^{p,q}$ and
the separable matrix cone $\mathscr{S}^{p,q}$
can be thought of as subsets of the vector space $\re^{\Omega}$,
for $\Omega$ as in \reff{df:Omg}.
For $\mc{B} \in \mathscr{P}^{p,q}$ and $\mc{A} \in \mathscr{S}^{p,q}$,
we can index them as
\[
\mc{B} = (b_{ijkl} )_{ (i,j,k,l) \in \Omega }, \quad
\mc{A} = (a_{ijkl} )_{ (i,j,k,l) \in \Omega }.
\]
Define their inner product in the standard way as
\[
\langle \mc{A}, \mc{B} \rangle := \sum_{(i,j,k,l) \in \Omega}
a_{ijkl} b_{ijkl}.
\]
The standard definition of dual cones is used in the paper.
A cone $\mc{C}$ is said to be pointed if
$\mc{C} \cap -\mc{C} = \{0\}$, and
it is said to be solid if it has nonempty interior.

\begin{prop} \label{prop:cone:posep}
The cones $\mathscr{P}^{p,q}$ and $\mathscr{S}^{p,q}$
are proper (i.e., closed, convex, pointed, and solid),
and they are dual to each other, i.e.,
\be \label{dual:P=S*}
(\mathscr{P}^{p,q} )^* = \mathscr{S}^{p,q}, \quad
(\mathscr{S}^{p,q} )^* = \mathscr{P}^{p,q}.
\ee
\end{prop}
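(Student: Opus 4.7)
The plan is to anchor the entire proof on a single pairing identity that yields the first duality directly, and then obtain closedness, the second duality, and the cone properties in sequence.

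First I would compute the key pairing. For a rank-one separable matrix $A = bb^T \otimes cc^T$ with $b \in \re^p$, $c \in \re^q$, formula~\reff{Aindx=aijkl} gives $a_{ijkl} = b_i b_k c_j c_l$ for every $(i,j,k,l) \in \Omega$. Substituting this into the bi-quadratic expansion~\reff{nt:yPhi(xx)y} yields
\[
\langle \mc{A}, \mc{B} \rangle
\, = \, \sum_{(i,j,k,l) \in \Omega} b_{ijkl} \, b_i c_j b_k c_l
\, = \, B(b,c)
\, = \, c^T \Phi(bb^T) c.
\]
By the equivalence recalled at the start of \S\ref{ssc:posmaps}, $\Phi \in \mathscr{P}^{p,q}$ iff $B(b,c) \geq 0$ for all $b, c$, which through the identity is equivalent to $\langle \mc{B}, \mc{A} \rangle \geq 0$ on all rank-one generators of $\mathscr{S}^{p,q}$. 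Spectrally decomposing each PSD factor $B_j, C_j$ shows these rank-one Kronecker products generate $\mathscr{S}^{p,q}$ as a convex cone, so linearity extends the inequality to all of $\mathscr{S}^{p,q}$. Hence $\mathscr{P}^{p,q} = (\mathscr{S}^{p,q})^*$, which is automatically closed and convex.

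Next I would show $\mathscr{S}^{p,q}$ is itself closed. Write it as the conic hull of the compact set $K := \{bb^T \otimes cc^T : \|b\|_2 = \|c\|_2 = 1\}$, which is the continuous image of $S^{p-1} \times S^{q-1}$. Since $\mbox{tr}(bb^T \otimes cc^T) = 1$ on $K$, the compact convex hull $\mbox{conv}(K)$ misses the origin, and a standard sequence argument (a convergent sequence $\lambda_n a_n \to x$ with $a_n \in \mbox{conv}(K)$ forces $\lambda_n$ bounded) shows that the cone over such a set is closed. The bipolar theorem, applied to the closed convex cone $\mathscr{S}^{p,q}$, then yields $(\mathscr{P}^{p,q})^* = (\mathscr{S}^{p,q})^{**} = \mathscr{S}^{p,q}$, completing \reff{dual:P=S*}.

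Pointedness follows directly in both cases. Any $A \in \mathscr{S}^{p,q}$ is positive semidefinite as a $pq \times pq$ matrix (a sum of PSD Kronecker products), so $\pm A \in \mathscr{S}^{p,q}$ forces $A = 0$. Similarly, if both $\Phi$ and $-\Phi$ are positive, then $\Phi$ vanishes on the spanning set $\mc{S}_+^p$, forcing $\Phi = 0$. Solidness of each cone is then inherited from pointedness of the other via the standard duality fact that a closed convex cone in finite dimension is pointed iff its dual has nonempty interior. The step I expect to require the most care is the closedness of $\mathscr{S}^{p,q}$, since the bipolar identification collapses without it; once closedness is in hand, the remaining assertions follow routinely from the pairing identity and standard convex-cone duality.
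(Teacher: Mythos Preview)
Your argument is correct and complete. The pairing identity, the closedness argument via the trace-one compact generator set, the bipolar step, and the pointed/solid duality are all sound.

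The route differs from the paper's, however. The paper does not argue directly: it observes that $\mathscr{S}^{p,q}$ is precisely the cone of truncated moment sequences admitting representing measures on the bi-sphere $\|x\|_2=\|y\|_2=1$, and that $\mathscr{P}^{p,q}$ is the cone of bi-quadratic forms nonnegative there; it then invokes \cite[Prop.~3.2]{linmomopt}, a general result about dual pairs of moment and nonnegative-polynomial cones, to obtain closedness, pointedness, solidness, and both dualities in one stroke. Your approach is more elementary and fully self-contained---it avoids the moment-problem machinery entirely and relies only on finite-dimensional convex analysis (conic hull of a compact set missing the origin, bipolar theorem, the pointed-iff-dual-solid equivalence). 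The paper's approach situates the proposition inside a broader framework that is reused later in \S\ref{sc:sep}, whereas yours is arguably cleaner for a reader who wants only this proposition. Both are valid; yours simply trades generality for directness.
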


The convexity of $\mathscr{P}^{p,q}$ are straightforward.
As in \cite[Theorem 2]{DLMO07}, it holds that
\[
\mathscr{S}^{p,q}=\mbox{conv}
\{(x\otimes y)(x\otimes y)^T| x\in\mathbb{R}^p, y\in\mathbb{R}^q\}.
\]
(The $\mbox{conv}$ denotes the convex hull.)
So, we can get the convexity of $\mathscr{S}^{p,q}$.
The polynomial $(x^Tx)(y^Ty) \in \mathscr{P}^{p,q}$
is strictly positive on the bi-sphere $\|x\|_2=\|y\|_2=1$.
The set $\mathscr{S}^{p,q}$ is the cone of
truncated multi-sequences in $\re^{\Omega}$
that admit representing measures supported on
the bi-sphere $\|x\|_2=\|y\|_2=1$.
Hence, the closedness, pointedness, and solidness of
the cones $\mathscr{P}^{p,q}$ and $\mathscr{S}^{p,q}$,
as well as the duality relationship \reff{dual:P=S*},
can be implied by \cite[Prop. 3.2]{linmomopt}.
We refer to \cite{GB02} for related work on
positive maps and separable matrices.

%
%

\section{Checking positive maps}
\label{sc:pm}
\setcounter{equation}{0}

This section discusses how to check whether a linear map
$\Phi: \mc{S}^p \to \mc{S}^q$ is positive or not.
The linear map $\Phi$ is uniquely determined by
\be \label{bqfm:B(xy)}
B(x,y) := y^T \Phi(xx^T) y,
\ee
a bi-quadratic form
in $x:=(x_1, \ldots, x_p)$ and $y:=(y_1,\ldots, y_q)$.
To check the positivity of $\Phi$, it is equivalent to
determine whether or not $B(x,y)$ is nonnegative on $x^Tx=y^Ty=1$.
So, we consider the optimization problem
\be  \label{minB(xy):2sph}
\left\{\baray{rl}
b_{min} := \min & B(x,y) \\
 s.t. & x^Tx =1,\, y^Ty=1.
\earay \right.
\ee
The first order optimality condition for \reff{minB(xy):2sph} implies that
\be \label{kkt:Bxy}
\bbm  B_x(x,y) \\ B_y(x,y)  \ebm =
\bbm  2\lmd_1 x \\ 2\lmd_2 y \ebm.
\ee
In the above, $B_x(x,y)$ (resp., $B_y(x,y)$) denotes the gradient of
$B(x,y)$ in $x$ (resp., $y$). Since $B(x,y)$ is a quadratic form
in both $x$ and $y$, it holds that
\be \label{xBx=yBy=2B}
\bbm  x^TB_x(x,y) \\ y^TB_y(x,y)  \ebm =
\bbm  2B(x,y) \\ 2B(x,y) \ebm.
\ee
Thus, \reff{kkt:Bxy} and \reff{xBx=yBy=2B} imply that
\[
\lmd_1 = \lmd_2 = B(x,y).
\]
Note that $(x^*,y^*)$ is optimal for \reff{minB(xy):2sph} if and only if
$(\pm x^*, \pm y^*)$ are all optimal.
By choosing the right signs, \reff{minB(xy):2sph} always has
an optimizer $(x^*,y^*)$ satisfying
\[
\mathbf{1}_p^Tx^* \geq 0, \, \mathbf{1}_q^Ty^* \geq 0.
\]
Therefore, \reff{minB(xy):2sph} is equivalent to the optimization problem
\be \label{minB(x,y):kkt}
\left\{\baray{rl}
\min & B(x,y) \\
 s.t. & x^Tx =1,\, y^Ty=1, \\
 &  B_{x}(x,y) - 2B(x,y) x = 0, \\
 &  B_{y}(x,y) - 2B(x,y) y = 0, \\
 &  \mathbf{1}_p^Tx \geq 0, \, \mathbf{1}_q^Ty \geq 0.
\earay \right.
\ee
It is a polynomial optimization problem of degree $5$.
Compared with \reff{minB(xy):2sph}, the problem
\reff{minB(x,y):kkt} has two main advantages:
\bit

\item The problem \reff{minB(x,y):kkt} has two more equalities
\[
B_x(x,y)-2B(x,y)x = 0, \quad B_y(x,y) - 2B(x,y) y = 0.
\]
By using them, Lasserre's hierarchy of semidefinite relaxations
(see \reff{min<B,w>:mom}) has finite convergence.
This is shown in Theorem~\ref{thm:posmap:cvg}.
However, without using them, Lasserre's hierarchy of
semidefinite relaxations for solving (3.2)
directly may not have finite convergence.

\item  The problem \reff{minB(x,y):kkt} has two more inequalities.
The number of minimizers of \reff{minB(x,y):kkt} is
only one quarter of those of \reff{minB(xy):2sph}.
Thus, in computations (e.g., by software {\tt GloptiPoly~3} \cite{GloPol3}),
solving \reff{minB(x,y):kkt} is often much easier
than solving \reff{minB(xy):2sph}, for numerical reasons.
This is because it is easier for {\tt GloptiPoly~3}
to identify convergence by using the flat extension condition
(see \reff{flatcd:Mt(w*)}).

\eit

The optimal value $b_{min}$ of \reff{minB(x,y):kkt}
is equal to that of \reff{minB(xy):2sph}.
Let $h,g$ be the tuples of constraining polynomials in \reff{minB(x,y):kkt}:
\be \label{df:hg:Bkkt}
\left\{\baray{l}
h = \Big(  x^Tx-1 , y^Ty-1, \,
B_{x}(x,y)  - 2B(x,y) x, \, B_{y}(x,y)  - 2B(x,y) y \Big), \\
g = \big(\mathbf{1}_p^Tx, \, \mathbf{1}_q^Ty \big).
\earay \right.
\ee
Lasserre's hierarchy \cite{Las01} of semidefinite relaxations for
solving \reff{minB(x,y):kkt} is
\be \label{min<B,w>:mom}
\left\{\baray{rl}
b_k^{(1)} := \min & \langle B, w \rangle  \\
 s.t. &  \langle 1, w \rangle = 1, \, L_{h}^{(k)}(w) = 0, \\
  & M_k(w) \succeq 0, \, L_{g}^{(k)}(w)  \succeq 0, \\
  &  w \in \re^{ \mathbb{M}[x,y]_{2k} },
\earay \right.
\ee
for the orders $k = 3, 4, \ldots$.
The product $\langle, \rangle$ is defined as in \reff{df:<f,w>}.
We refer to \reff{df:Mk(w)} and \reff{df:L(h):L(g)}
for matrices $M_k(w)$, $L_{h}^{(k)}(w)$, and $L_{g}^{(k)}(w)$.
They are all linear in $w$.
The dual problem of \reff{min<B,w>:mom} is
\be \label{max:B-gm:sos}
\left\{\baray{rl}
b_k^{(2)} := \max & \gamma \\
 s.t. & B - \gamma \in  I_{2k}(h) + Q_k(g).
\earay \right.
\ee
In the above, the notation $I_{2k}(h)$ and $Q_k(g)$
are respectively defined as in \reff{df:<h>k} and \reff{Qk(g):def}.
By the weak duality, it holds that for all $k$
\be \label{bk<=bmin}
b_k^{(2)} \leq b_k^{(1)} \leq b_{min}.
\ee
As in \cite{Las01},
$\{ b_k^{(1)} \}$ and $\{ b_k^{(2)} \}$
are both monotonically increasing.

%
%

A practical question is how to check the convergence of
$b_k^{(1)}$ and $b_k^{(2)}$ to $b_{min}$.
The following rank condition, for some $t \in [2,k]$,
\be \label{flatcd:Mt(w*)}
\rank \, M_{t}(w^*)  = \rank \, M_{t+1} (w^*)
\ee
is a proper stopping criterion (cf.~\cite{HenLas05,Nie-ft}).
If \reff{flatcd:Mt(w*)} is satisfied, then $b_k^{(1)}  = b_{min}$
and we can get $r:=\rank \,M_{t}(w^*)$ global minimizers of
\reff{minB(x,y):kkt}. This can be seen as follows. From
\reff{flatcd:Mt(w*)}, by Theorem~1.1 of \cite{CuFi05}
(also see \cite{HenLas05,Nie-ft} for elaborations),
we can get the decomposition
\[
w^*|_{2t} = c_1 [(u_1,v_1)]_{2t} + \cdots + c_r [(u_r,v_r)]_{2t},
\]
where each $c_i>0$ and $u_i^Tu_i = v_i^Tv_i = 1$.
The equality $\langle 1, w^* \rangle=1$ leads to
\[
c_1 + \cdots + c_r  = 1.
\]
Since $w^*$ is an optimizer of \reff{min<B,w>:mom},
the above decomposition of $w^*|_{2t}$ implies
\[
b_k^{(1)} = c_1 B(u_1,v_1) + \cdots + c_r B(u_r,v_r).
\]
Since $b_{min} \leq B(u_i,v_i)$ for each $i$, \reff{bk<=bmin} shows that
\[
b_k^{(1)} \leq B(u_1,v_1), \ldots, b_k^{(1)} \leq B(u_r,v_r).
\]
By the above, we can get that
\[
b_{min} \leq B(u_1,v_1) = \cdots = B(u_r,v_r) =  b_k^{(1)} \leq b_{min}.
\]
So, $b_k^{(1)} = b_{min}$, and $(u_1,v_1), \ldots, (u_r,v_r)$
are global minimizers of \reff{minB(x,y):kkt}.

\begin{alg} \label{alg:posmap}
(Check positivity of a linear map $\Phi: \mc{S}^p \to \mc{S}^q$.)
Formulate the bi-quadratic form $B(x,y)$ as in \reff{bqfm:B(xy)}.
Let $k :=3$.
\bit

\item [Step 1] Solve the semidefnite relaxation \reff{min<B,w>:mom}
for a minimizer $w^{*,k}$.

\item [Step 2]
If \reff{flatcd:Mt(w*)} is satisfied for some $t\in [2,k]$, go to Step~3;
otherwise, let $k:=k+1$ and go to Step~1.

\item [Step 3] Compute $r:=\rank \,M_{t}(w^*)$ global minimizers for
\reff{minB(x,y):kkt}. Output $b_k^{(1)}$ as the minimum value $b_{min}$
of \reff{minB(xy):2sph}. If $b_{min} \geq 0$, then $\Phi$
is a positive map; otherwise, it is not.

\eit

\end{alg}

In Step~3, the method in \cite{HenLas05} can be
applied to get global minimizers for \reff{minB(x,y):kkt}.
The convergence of Algorithm~\ref{alg:posmap} is summarized as follows.

\begin{theorem} \label{thm:posmap:cvg}
Let $B(x,y)$ be the bi-quadratic form
for a linear map $\Phi: \mc{S}^p \to \mc{S}^q$
as in \reff{bqfm:B(xy)}, and let
$b_{min}$ be the optimal value of \reff{minB(xy):2sph}.
Let $b_k^{(1)}, b_k^{(2)}$ be the optimal values as in
\reff{min<B,w>:mom}-\reff{max:B-gm:sos}.
Then we have:

\bit

\item [(i)] For all $k$ sufficiently large, it holds that
\[
b_k^{(1)} = b_k^{(2)}  = b_{min}.
\]
Hence, $\Phi$ is positive if and only if
$b_k^{(1)} \geq 0$ (or $b_k^{(2)} \geq 0$) for some $k$.

\item [(ii)] Assume \reff{minB(xy):2sph} has finitely many minimizers.
If $k$ is large enough, then for every optimizer
$w^*$ of \reff{min<B,w>:mom} there exists $t \in [2,k]$
satisfying \reff{flatcd:Mt(w*)}.

\eit
\end{theorem}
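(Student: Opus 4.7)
The overall plan is to view \reff{minB(x,y):kkt} as a compact polynomial optimization problem and invoke existing finite-convergence and flat-truncation theory for Lasserre's hierarchy. By the derivation preceding the theorem, \reff{minB(xy):2sph} and \reff{minB(x,y):kkt} share the optimal value $b_{min}$, and \reff{min<B,w>:mom}--\reff{max:B-gm:sos} are the standard moment/SOS relaxations of \reff{minB(x,y):kkt} associated with the tuples $h,g$ in \reff{df:hg:Bkkt}. Since $x^Tx - 1$ and $y^Ty - 1$ lie in $I(h)$, the polynomial $2 - \|(x,y)\|_2^2$ lies in $I(h) + Q(g)$, so this module is archimedean and Putinar's Positivstellensatz is in force.

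For part (i), the key structural observation is that the extra equalities $B_x - 2B\,x = 0$ and $B_y - 2B\,y = 0$ built into \reff{df:hg:Bkkt} are exactly the first-order KKT conditions for \reff{minB(xy):2sph}, as shown by \reff{kkt:Bxy}--\reff{xBx=yBy=2B}. Every feasible point of \reff{minB(x,y):kkt} is therefore a KKT point of \reff{minB(xy):2sph}, with Lagrange multipliers equal to the objective value. I would then invoke the finite-convergence theorem of \cite{Nie-opcd} for Lasserre's hierarchy on such KKT-augmented problems: because the necessary first-order conditions hold identically on the feasible set, that theorem yields $b_k^{(2)} = b_{min}$ for all sufficiently large $k$. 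Strong duality $b_k^{(1)} = b_k^{(2)}$ then follows from a Slater condition for \reff{max:B-gm:sos}: using the archimedean bound one sees that every $\gamma$ slightly less than $b_{min}$ admits a strictly feasible SOS representation, so there is no duality gap. Combining with \reff{bk<=bmin} gives the equivalence $\Phi \in \mathscr{P}^{p,q} \Longleftrightarrow b_k^{(1)} \geq 0$ for some $k$.

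For part (ii), I would invoke the flat-truncation characterization of \cite{Nie-ft}. Under the hypothesis that \reff{minB(xy):2sph} has finitely many minimizers, the reformulation \reff{minB(x,y):kkt} also does (the two minimizer sets differ only by the sign normalization $\mathbf{1}_p^T x \geq 0$, $\mathbf{1}_q^T y \geq 0$). That theorem then guarantees that, for all sufficiently large $k$, every optimizer $w^*$ of \reff{min<B,w>:mom} satisfies the rank equality \reff{flatcd:Mt(w*)} at some order $t \in [2,k]$, from which the $r := \mathrm{rank}\,M_t(w^*)$ minimizers can be extracted as described immediately after \reff{flatcd:Mt(w*)}.

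The main obstacle I anticipate is justifying the application of \cite{Nie-opcd} uniformly over \emph{all} bi-quadratic forms $B$, with no genericity assumption: the second-order sufficient and strict-complementarity hypotheses that underlie the standard finite-convergence theorems can fail at degenerate minimizers of \reff{minB(xy):2sph} on the bi-sphere. The cleanest workaround is a density-plus-limit argument: establish finite convergence first for a dense set of $B$ where these standard optimality conditions hold at every minimizer, and then extend to arbitrary $B$ by exploiting the closedness of the finite-dimensional cone $I_{2k}(h) + Q_k(g)$ together with the continuity of the relaxations \reff{min<B,w>:mom}--\reff{max:B-gm:sos} in the data.
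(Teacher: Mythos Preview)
Your diagnosis of the obstacle in part~(i) is exactly right, but the proposed cure---density in $B$ plus a limiting argument---does not close the gap. The tuple $h$ in \reff{df:hg:Bkkt} depends on $B$, so the cones $I_{2k}(h)+Q_k(g)$ move with $B$; even granting closedness for each fixed $B$, a sequence $B_\varepsilon\to B$ with finite convergence at orders $k_\varepsilon$ gives you nothing unless you can bound $k_\varepsilon$ uniformly, and the results in \cite{Nie-opcd} provide no such uniform bound near degenerate minimizers. So the limit step is where the argument would fail.

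The paper bypasses \cite{Nie-opcd} entirely for part~(i) and instead routes through two other results that are unconditional in $B$. First it rewrites the KKT system as $\rank\,\widetilde B(x,y)=2$ for a $( p+q)\times 3$ matrix and imposes all $3\times 3$ minors as equalities, obtaining a Jacobian-type formulation $\widetilde h=0$; Theorem~2.3 of \cite{Nie-jac} then gives finite convergence $\widetilde b_k^{(1)}=\widetilde b_k^{(2)}=b_{min}$ for this formulation with no genericity hypothesis. Second, since $\{\widetilde h=0\}$ and $\{h=0\}$ define the same real variety, Theorem~3.1 of \cite{Nie-poprv} transfers finite convergence of the SOS bound to the $h$-only hierarchy $\widehat b_k^{(2)}$. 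The inclusion $\Sigma[x,y]_{2k}\subseteq Q_k(g)$ then sandwiches $\widehat b_k^{(2)}\le b_k^{(2)}\le b_k^{(1)}\le b_{min}$, finishing~(i). Part~(ii) is handled similarly: because the feasible set of \reff{min<B,w>:mom} sits inside that of the $h$-only moment relaxation and the optimal values coincide for large $k$, every minimizer $w^*$ of \reff{min<B,w>:mom} is also optimal for the $h$-only problem, and Theorem~2.6 of \cite{Nie-ft} applied there yields the flat truncation. Your direct invocation of \cite{Nie-ft} on \reff{min<B,w>:mom} is morally right but rests on part~(i), which is where your argument needs the Jacobian/real-variety machinery rather than a perturbation.
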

\begin{proof}
(i) The optimality condition \reff{kkt:Bxy} is equivalent to that
\[
\rank \, \widetilde{B}(x,y) = 2, \quad \mbox{ where } \quad
\widetilde{B}(x,y) :=
\bbm B_x(x,y)  &  x  & 0 \\ B_y(x,y)  & 0 & y \ebm.
\]
Let $\phi_1, \ldots, \phi_J$ be the all $3$-by-$3$ minors
of $\widetilde{B}(x,y)$ and $\widetilde{h}$ be the tuple
\[
\widetilde{h}  := (x^Tx -1,y^Ty-1, \phi_1, \ldots, \phi_J).
\]
Then \reff{minB(xy):2sph} is equivalent to the optimization problem
\be \label{mB:th(xy)==0}
\min \quad B(x,y) \quad s.t. \quad \widetilde{h}(x,y) = 0.
\ee
Lasserre's hierarchy of semidefinite relaxations
for solving \reff{mB:th(xy)==0} is
\be
\left\{\baray{rl}
\widetilde{b}_k^{(1)}  := \min & \langle B, w \rangle  \\
 s.t. & \langle 1, w \rangle = 1, \, L_{\tilde{h}}^{(k)}(w) = 0,  \\
  &   \, M_k(w) \succeq 0, \, w \in \re^{ \mathbb{M}[x,y]_{2k} },
\earay \right.
\ee
for $k=3,4,\ldots$. Its dual optimization problem is
\be
\left\{\baray{rl}
\widetilde{b}_k^{(2)} := \max & \gamma \\
 s.t. & B - \gamma \in  I_{2k}(\widetilde{h}) + \Sig[x,y]_{2k}.
\earay \right.
\ee
By Theorem~2.3 of \cite{Nie-jac}, for all $k$ big enough, we have
\[
\widetilde{b}_k^{(1)} = \widetilde{b}_k^{(2)} = b_{min}.
\]
That is, both $\{ \widetilde{b}_k^{(2)} \}$
and $\{ \widetilde{b}_k^{(1)} \}$ have finite convergence to $b_{min}$.
Consider the optimization problem
\be  \label{minB:h(x,y)=0}
\min \quad B(x,y) \quad s.t. \quad h(x,y) = 0.
\ee
Lasserre's hierarchy of semidefinite relaxations for \reff{minB:h(x,y)=0} is
\be  \label{mom:miB:h(z)=0}
\left\{\baray{rl}
\widehat{b}_k^{(1)} := \min & \langle B, w \rangle  \\
 s.t. & \langle 1, w \rangle = 1, \, L_{h}^{(k)}(w) = 0, \\
 &  M_k(w) \succeq 0, w \in \re^{ \mathbb{M}[x,y]_{2k} }.
\earay \right.
\ee
Its dual optimization problem is
\be \label{sos:miB:h(z)=0}
\left\{\baray{rl}
\widehat{b}_k^{(2)} := \max & \gamma \\
 s.t. & B - \gamma \in  I_{2k}(h)  + \Sig[x,y]_{2k}.
\earay \right.
\ee
The feasible sets of \reff{mB:th(xy)==0}
and \reff{minB:h(x,y)=0} are same.
By Theorem~3.1 of \cite{Nie-poprv}, the sequence
$\{ \widehat{b}_k^{(2)} \}$ also has finite convergence to $b_{min}$.
Since $\Sig[x,y]_{2k} \subseteq Q_k(g)$, we have
\[
\widehat{b}_k^{(2)} \leq b_k^{(2)} \leq b_k^{(1)} \leq b_{min}
\]
for all $k$. Hence, both $\{ b_k^{(1)} \}$ and $\{ b_k^{(2)} \}$
have finite convergence to $b_{min}$. Thus, by \reff{bk<=bmin},
$\Phi$ is positive if and only  if
for some $k$, $b_k^{1)}\geq 0$ or $b_k^{(2)} \geq 0$.

(ii) In the above, we have shown that
$\{ b_k^{(1)} \}$ and $\{ \widehat{b}_k^{(1)} \}$
have finite convergence to $b_{min}$. For $k$ sufficiently large,
\[
\langle B, w^* \rangle = b_k^{(1)} = \widehat{b}_k^{(1)} = b_{min}.
\]
Because the feasible set of \reff{min<B,w>:mom}
is contained in that of \reff{mom:miB:h(z)=0},
$w^*$ is also a minimizer of \reff{mom:miB:h(z)=0} when $k$ is big enough.
Note that, for large $k$,
\[
\widehat{b}_k^{(1)}  = \widehat{b}_k^{(2)}  =   b_{min}.
\]
Since $\widehat{b}_k^{(1)}, \widehat{b}_k^{(2)}$ are respectively
the optimal values of \reff{mom:miB:h(z)=0}-\reff{sos:miB:h(z)=0},
there is no duality gap between \reff{mom:miB:h(z)=0}
and \reff{sos:miB:h(z)=0}, when $k$ is large. Let
\[
d_h := \max(1, \lceil \deg(h) /2 \rceil ).
\]
Note that \reff{mom:miB:h(z)=0}-\reff{sos:miB:h(z)=0}
are relaxations for solving \reff{minB:h(x,y)=0}.
By the assumption in the item ii), we know that
\reff{minB:h(x,y)=0} has finitely many optimizers.
By Theorem~2.6 of \cite{Nie-ft}, for $k$ big enough,
there exists $t \in [2,k]$ such that
\[
\rank \, M_{t}(w^*)  = \rank \, M_{t+d_h} (w^*).
\]
On the other hand, it always holds that
\[
\rank \, M_{t}(w^*)  \leq \rank \, M_{t+1} (w^*) \leq  \rank \, M_{t+d_h} (w^*).
\]
So, \reff{flatcd:Mt(w*)} must be satisfied
when $k$ is sufficiently large.
\end{proof}

\section{Decomposition of separable matrices}
\label{sc:sep}
\setcounter{equation}{0}

This section discusses how to check whether a matrix is separable or not.
We first formulate the question as a special truncated moment problem,
and then propose a semidefinite algorithm for solving it.

\subsection{An equivalent reformulation}

Recall the matrix space $\mathscr{K}^{p,q}$ as in \reff{matspa:Kpq} and
the separable matrix cone $\mathscr{S}^{p,q}$ as in \reff{cone:sepmat}.
As shown in Dahl~et al.~\cite[Theorem~2.2]{DLMO07}, every separable matrix in
$\mathscr{S}^{p,q}$ is a
%
%
nonnegative linear combination of rank-$1$ Kronecker products like
\[
(u u^T) \otimes (v v^T),
\]
where $u^Tu=v^Tv=1$.
By choosing the right signs, the above $u,v$ can be chosen such that
\[
\mathbf{1}_p^Tu \geq 0, \quad \mathbf{1}_q^T v \geq 0.
\]
An advantage for using the above inequalities is that
the software {\tt GloptiPoly~3} has better numerical performance
for solving the semidefinite relaxation \reff{min<R,w>},
than the one without such inequalities.
Moreover, using them may help us get shorter S-decompositions.

Denote the set
\be \label{def:S+pq}
K :=  \left\{
(x,y) \in \re^p \times \re^q
\left| \baray{c}
x^Tx = 1, y^Ty = 1, \\
\mathbf{1}_p^Tx \geq 0, \, \mathbf{1}_q^T y \geq 0
\earay \right. \right\}.
\ee
Therefore, $A \in \mathscr{S}^{p,q}$ if and only if
\be \label{A:S-dcmp}
A = \sum_{s=1}^N  c_s (u_s u_s^T) \otimes (v_s v_s^T)
\ee
for $c_1, \ldots, c_N>0$ and $(u_1,v_1),\ldots, (u_N, v_N) \in K$.
The equation \reff{A:S-dcmp} is called an {\it S-decomposition} of $A$.
The above is equivalent to that
\be   \label{A=sum:kron(u_i,v_i)}
A_{\pi(i,j), \pi(k,l)} = \sum_{s=1}^N  c_s \cdot (u_s)_i (v_s)_j  (u_s)_k (v_s)_l
\ee
for all pairs
$(i,j), (k,l) \in [p] \times [q]$, with
\[
\pi(i,j) :=(i-1)q+j,  \quad \pi(k,l) := (k-1)q+l.
\]
Let $\mu$ be the weighted sum of Dirac measures:
\be
\mu := c_1 \dt_{(u_1,v_1)} + \cdots + c_N  \dt_{(u_N, v_N)}.
\ee
Then, \reff{A:S-dcmp} is equivalent to
\[
A_{\pi(i,j), \pi(k,l)} = \int_{ K }
x_iy_j x_k y_l \mathtt{d} \mu  \quad
\forall \,(i,j), (k,l) \in [p] \times [q],
\]
which is then equivalent to that
\be \label{A=int:xy:dmu}
A  = \int_{ K  }
(x x^T) \otimes (y y^T) \mathtt{d} \mu.
\ee
Denote the monomial set
\be \label{df:E}
\mc{E} = \big\{ x_iy_j x_k y_l: \,
1 \leq i \leq k \leq p,   1 \leq j \leq l \leq q \big \}.
\ee
The cardinality of $\mc{E}$ is
\[
\frac{1}{4} p(p+1)q(q+1),
\]
the dimension of the space $\mathscr{K}^{p,q}$.
The monomial $x_iy_j x_k y_l$ can be uniquely identified
by the tuple $(i,j,k,l) \in \Omega$, as in \reff{df:Omg}.
Therefore, we can index each matrix $A \in \mathscr{K}^{p,q}$
equivalently by monomials in $\mc{E}$ as
\[
A_{ x_iy_j x_k y_l }  \, := \,  A_{ \pi(i,j), \pi(k,l) }.
\]
So, each $A \in \mathscr{K}^{p,q}$ can be uniquely identified
by the vector $( A_{ b} )_{ b  \in \mc{E}  }$. Let
\be  \label{a=A|E}
\mathbf{a} \, :=  ( A_{ b } )_{ b \in \mc{E}  }.
\ee
The vector $\mathbf{a}$ is an $\mc{E}$-truncated multi-sequence ($\mc{E}$-tms).
We refer to \cite{Nie-ATKMP}
for such structured truncated moment problems.

If there exists a Borel measure $\mu$ supported in
$K$ satisfying \reff{A=int:xy:dmu}, then $A$ must be separable.
This can be implied by Proposition~3.3 of \cite{Nie-ATKMP}.
Such $\mu$ is called a $K$-representing measure for $\mathbf{a}$.
%
%

Summarizing the above, we get the proposition.

\begin{prop}  \label{pr:sep=tmp}
For each $A \in \mathscr{K}^{p,q}$,
%
%
the matrix $A$ is separable (i.e., $A \in \mathscr{S}^{p,q}$)
if and only if \reff{A=int:xy:dmu} is satisfied by a Borel measure
$\mu$ supported in $K$.
\end{prop}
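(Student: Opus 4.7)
The plan is to establish the two directions separately, since the forward implication is essentially built into the setup preceding the statement, while the reverse direction requires a finite atomic reduction of a general Borel measure.

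For the forward direction, assume $A \in \mathscr{S}^{p,q}$. By \cite[Theorem~2.2]{DLMO07}, already invoked earlier, $A$ admits a decomposition as a nonnegative combination of rank-one Kronecker products $(u u^T) \otimes (v v^T)$ with $\|u\|_2 = \|v\|_2 = 1$. Since $(u u^T) \otimes (v v^T) = ((-u)(-u)^T) \otimes ((-v)(-v)^T)$, we may flip the signs of $u$ and $v$ independently so that $\mathbf{1}_p^T u \geq 0$ and $\mathbf{1}_q^T v \geq 0$, placing each pair $(u_s, v_s)$ in $K$. The finite atomic measure $\mu = \sum_{s=1}^N c_s \delta_{(u_s, v_s)}$ is then supported in $K$, and substituting it into the right-hand side of \reff{A=int:xy:dmu} recovers $A$ block by block through \reff{A=sum:kron(u_i,v_i)}.

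For the reverse direction, assume a Borel measure $\mu$ supported in $K$ satisfies \reff{A=int:xy:dmu}. The core observation is that both sides of \reff{A=int:xy:dmu} depend only on the $\mc{E}$-truncated moment vector $\mathbf{a} = (A_b)_{b\in\mc{E}}$ defined in \reff{a=A|E}; equivalently, $\mathbf{a}$ admits $\mu$ as a $K$-representing measure in the sense of the monomial set $\mc{E}$ from \reff{df:E}. Since $K$ is compact and $\mc{E}$ is finite, a Tchakaloff-type reduction (which is exactly the content of \cite[Prop.~3.3]{Nie-ATKMP} cited in the paper) produces a finitely atomic measure $\tilde\mu = \sum_{s=1}^N c_s \delta_{(u_s, v_s)}$, supported in $K$ with $c_s > 0$, whose $\mc{E}$-moments coincide with those of $\mu$. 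Substituting $\tilde\mu$ into \reff{A=int:xy:dmu} yields $A = \sum_{s=1}^N c_s (u_s u_s^T) \otimes (v_s v_s^T)$, which is precisely an S-decomposition as in \reff{A:S-dcmp}; hence $A \in \mathscr{S}^{p,q}$.

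The main step is the reverse direction, and the only nontrivial ingredient is the finite atomic reduction. The obstacle I would anticipate is verifying that the hypotheses of \cite[Prop.~3.3]{Nie-ATKMP} are met here, namely that $K$ is compact (which is immediate, as $K$ is a closed subset of the product of two unit spheres) and that the monomial set $\mc{E}$ is suitable for their framework; both are clear since $\mc{E}$ consists of bi-homogeneous monomials of bidegree $(2,2)$ that index the subspace $\mathscr{K}^{p,q}$. Once this is in hand, both directions are short, and the proposition follows.
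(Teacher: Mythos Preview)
Your proposal is correct and follows essentially the same approach as the paper: the forward direction is the finite atomic measure built from an S-decomposition (exactly the construction in the paragraphs preceding the proposition), and the reverse direction is handled by invoking \cite[Prop.~3.3]{Nie-ATKMP}, which is precisely what the paper cites. Your additional commentary on verifying the hypotheses of that proposition is sound but not needed beyond what the paper already assumes.
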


The vector $\mathbf{a}$, as in \reff{a=A|E}, is an $\mc{E}$-tms of degree $4$.
By Proposition~\ref{pr:sep=tmp}, to check if $A$ is separable or not
is equivalent to detecting if $\mathbf{a}$ has a representing measure
supported in $K$. The latter is a truncated moment problem.
Let
\be \label{df:hg:sepmat}
h :=(x^Tx -1,  y^Ty-1), \quad  g := (\mathbf{1}_p^Tx, \mathbf{1}_q^Ty ).
\ee
Suppose $\omega \in \re^{ \mathbb{M}[x,y]_{2t} }$ is
an extension of $\mathbf{a}$, i.e., $\omega|_{\mc{E}} = \mathbf{a}$.
If $\omega$ is flat with respect to
$h=0$ and $g\geq 0$, i.e., it satisfies
\be  \label{cd:flat:omga}
\, L_{h}^{(t)}(\omega) = 0, \quad
 L_{g}^{(t)}(\omega) \succeq 0, \quad
 \rank \, M_{t-1}(\omega) =  \rank \, M_{t}(\omega),
\ee
then there exist $c_i >0$, $(u_i, v_i) \in K$ ($i=1,\ldots,r$) such that
\be \label{omg=sum:cixi}
\omega = c_1 [(u_1, v_1)]_{2t} + \cdots + c_r [(u_r, v_r)]_{2t}.
\ee
The extension condition $\omega|_{\mc{E}} = \mathbf{a}$
and \reff{omg=sum:cixi} imply that
\[
\mathbf{a} = c_1 [(u_1,v_1)]_{\mc{E}} + \cdots + c_r [(u_r,v_r)]_{\mc{E}}.
\]
From \reff{a=A|E}, we can get
\[
A = c_1 (u_1 u_1^T) \otimes (v_1 v_1^T)  + \cdots +
c_r (u_r u_r^T) \otimes (v_r v_r^T).
\]
This gives an S-decomposition for $A$ if $\omega$ is flat.
Such $\omega$ is called a flat extension of $\mathbf{a}$.

If there exists a flat extension of $\mathbf{a}$, then $A$ is separable.
Conversely, if $A$ is separable, then $\mathbf{a}$ must have a flat extension
(cf.~\cite[Prop.~3.3]{Nie-ATKMP}).
When does $\mathbf{a}$ have a flat extension?
If yes, how can we find one?
If no, how do we know its nonexistence?
We propose semidefinite relaxations for solving such questions.

\subsection{A semidefinite algorithm}

By Proposition~\ref{pr:sep=tmp}, a matrix $A \in \mathscr{K}^{p,q}$
is separable if and only if the vector $\mathbf{a}$,
as in \reff{a=A|E}, has a representing measure supported in $K$.
This can be detected by solving semidefinite relaxations.

Choose a generic SOS polynomial $R \in \Sig[x,y]_6$.
Let $h,g$ be as in \reff{df:hg:sepmat}.
For relaxation orders $k \geq 3$, consider the semidefinite relaxation
\be \label{min<R,w>}
\left\{ \baray{rl}
\min & \langle R, w \rangle  \\
s.t. &  w|_{\mc{E}} = \mathbf{a}, \, L_{h}^{(k)}(w) = 0, \,
           w \in \re^{ \mathbb{M}[x,y]_{2k} },  \\
 &  M_k(w) \succeq 0, \, L_{g}^{(k)}(w) \succeq 0.
\earay \right.
\ee
(See \reff{df:<f,w>} for the product $\langle,\rangle$.)
The dual problem of \reff{min<R,w>} is
\be  \label{sos:max<p,y>}
\left\{ \baray{rl}
\max   &  \langle f, \mathbf{a} \rangle  \\
s.t.  &  R - f \in I_{2k}(h) + Q_k( g ), \, f \in \mbox{span}\{\mc{E}\}.
\earay \right.
\ee
The decision variable in \reff{sos:max<p,y>}
is the vector of coefficients of $f$.

\begin{alg} \label{sdpalg:A-tkmp}
(Check membership in the cone $\mathscr{S}^{p,q}$.)
For a given matrix $A \in \mathscr{K}^{p,q}$, do the following:
\bit

\item [Step 0] Choose a generic $R \in \Sig[x,y]_6$. Let $k=3$.

\item [Step 1] If \reff{min<R,w>} is infeasible, then $A$ is not separable and stop;
otherwise, solve it for a minimizer $w^{*,k}$. Let $t = 2$.

\item [Step 2] Let $\omega:=w^{*,k}|_{2t}$.
If it satisfies \reff{cd:flat:omga}, go to Step~4; otherwise, go to Step~3.

\item [Step 3] If $t<k$, set $t:=t+1$ and go to Step~2;
otherwise, set $k:=k+1$ and go to Step~1.

\item [Step 4] Compute $c_i>0$ and $(u_i, v_i) \in K$.
Let each $a_i = c_i^{\frac 14} u_i, b_i = c_i^{\frac 14} v_i$.
Output the S-decomposition of $A$ as
\[
A = \sum_{i=1}^r   ( a_i a_i^T)  \otimes  (b_ib_i^T).
\]

\eit

\end{alg}

In Step~0, we can choose a random matrix $G$ of length $\binom{p+q+3}{3}$
and then let
\[
R = [x,y]_3^T (G^TG) [x,y]_3.
\]
Step~1 is justified by Theorem~\ref{thm:notsep}.
In Step~4, the method in Henrion and Lasserre \cite{HenLas05}
can be used to compute $c_i$ and $(u_i,v_i)$.
Indeed, Algorithm~\ref{sdpalg:A-tkmp} can be easily implemented
by the software {\tt GlotpiPoly~3} \cite{GloPol3}.

In Step~2, we need to check the rank condition in \reff{cd:flat:omga}.
In numerical computations, sometimes it may be difficult
to determine matrix ranks. This is a classical question in numerical linear algebra.
A common practice is to evaluate the rank as the number of singular values
larger than a threshold (e.g., $10^{-6}$).
We refer to the book \cite{Demmel} for how to
evaluate matrix ranks numerically.

\subsection{Convergence of the algorithm}

First, we study how to detect that $A$ is not separable.

\begin{theorem} \label{thm:notsep}
Let $A \in \mathscr{K}^{p,q}$ and $\mathbf{a} = A|_{\mc{E}}$
as in \reff{a=A|E}. Then we have:

\bit

\item [(i)] If \reff{min<R,w>} is infeasible for some $k$,
then $A$ is not separable, i.e., $A \not\in \mathscr{S}^{p,q}$.

\item [(ii)] If $A \not\in \mathscr{S}^{p,q}$,
then \reff{min<R,w>} is infeasible when $k$ is big enough.

\eit
\end{theorem}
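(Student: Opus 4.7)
The approach to part (i) is immediate from Proposition~\ref{pr:sep=tmp}. If $A \in \mathscr{S}^{p,q}$, fix a Borel measure $\mu$ supported in $K$ with $A = \int_K (xx^T) \otimes (yy^T) \, \mathtt{d}\mu$, and for each $k$ take $w^{(k)}$ to be the truncated moment vector of $\mu$ up to degree $2k$. Then $w^{(k)}|_{\mc{E}} = \mathbf{a}$ by \reff{A=int:xy:dmu} and \reff{a=A|E}; the equality $L_h^{(k)}(w^{(k)}) = 0$ holds since $h$ vanishes on $K$; and $M_k(w^{(k)}) \succeq 0$, $L_g^{(k)}(w^{(k)}) \succeq 0$ are the standard moment matrix inequalities for a measure on $K$. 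So \reff{min<R,w>} is feasible at every order $k$, and the contrapositive proves (i).

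For part (ii) my plan is to argue by contradiction: assume \reff{min<R,w>} is feasible at every order with feasible points $w^{(k)}$, and show that $\mathbf{a}$ admits a $K$-representing measure, so that $A$ is separable by Proposition~\ref{pr:sep=tmp}. The first step is to prove uniform boundedness of the $w^{(k)}$ on each fixed truncation level. Applying $L_h^{(k)}(w^{(k)}) = 0$ with simple test polynomials (legal once $k \ge 2$) pins down the zeroth moment: iterating $\sum_i w_{x_i^2 \eta} = w_\eta$ and $\sum_j w_{y_j^2 \eta} = w_\eta$ gives $w^{(k)}_1 = \sum_{i,j} a_{ijij}$, a constant independent of $k$. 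Next, I would note that $I(h) + Q(g)$ is archimedean: $2 - x^Tx - y^Ty \in I(h)$ and its nonnegativity set is compact. Consequently, for every monomial $m \in \mbM[x,y]$ there exist $N_m > 0$ and $k_m$ with $N_m \pm m \in I_{2k_m}(h) + Q_{k_m}(g)$; pairing this against $w^{(k)}$ and using the psd/zero constraints yields $|w^{(k)}_m| \le N_m w^{(k)}_1$ whenever $k \ge k_m$. Hence on each fixed degree $D$ the sequence $\{w^{(k)}|_{2D}\}$ is eventually bounded in $\re^{\mbM[x,y]_{2D}}$.

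A standard diagonal extraction then produces a subsequence $k_n \to \infty$ and an infinite multi-sequence $w^* \in \re^{\mbM[x,y]}$ with entrywise limit $w^{(k_n)} \to w^*$. Passing to the limit in the linear and closed constraints gives $w^*|_{\mc{E}} = \mathbf{a}$, $M_k(w^*) \succeq 0$, $L_h^{(k)}(w^*) = 0$, and $L_g^{(k)}(w^*) \succeq 0$ for every $k$. Since $I(h) + Q(g)$ is archimedean, the dual moment form of Putinar's theorem (as recalled after \reff{df:S(hg)}) implies that $w^*$ admits a Borel representing measure $\mu^*$ supported in $K$; restricting $\mu^*$ to monomials in $\mc{E}$ reproduces $\mathbf{a}$, so by Proposition~\ref{pr:sep=tmp} the matrix $A$ is separable, contradicting the hypothesis. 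The step I expect to be the main obstacle is the uniform-in-$k$ bound $|w^{(k)}_m| \le N_m w^{(k)}_1$: the archimedean certificate $N_m \pm m \in I(h) + Q(g)$ has a degree that grows with $m$ and must be accommodated inside the truncations $I_{2k}(h) + Q_k(g)$ as $k$ and $m$ vary; once that estimate is in place, the rest of (ii) is the textbook compactness argument for asymptotic convergence of Lasserre's moment hierarchy on a compact basic semialgebraic set.
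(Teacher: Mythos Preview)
Your proof of (i) is essentially the paper's: construct a feasible moment vector from a representing measure of $A$ and conclude by contraposition.

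For (ii) your argument is correct but follows a genuinely different route from the paper's. The paper proceeds via the cone duality of Proposition~\ref{prop:cone:posep}: since $A\notin\mathscr{S}^{p,q}$, there is a nonnegative bi-quadratic form $B_1$ with $\langle B_1,A\rangle<0$; perturbing to $B_2=B_1+\varepsilon(x^Tx)(y^Ty)$ makes $B_2$ strictly positive on $K$ while keeping $\langle B_2,A\rangle<0$, and Putinar's Positivstellensatz places $B_2\in I_{2k_0}(h)+Q_{k_0}(g)$ for some $k_0$. Then $-\tau B_2$ is a feasible improving direction in the dual \reff{sos:max<p,y>} for all $\tau>0$, so the dual is unbounded and the primal \reff{min<R,w>} is infeasible for every $k\ge k_0$. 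This is short, yields an explicit infeasibility certificate, and directly exploits the paper's duality between $\mathscr{P}^{p,q}$ and $\mathscr{S}^{p,q}$. Your compactness argument instead pins down $w^{(k)}_1$ from the equality constraints, uses archimedeanity to bound each entry, extracts a diagonal limit, and invokes the moment side of Putinar to produce a $K$-representing measure for $\mathbf{a}$; it is the standard asymptotic-convergence proof for Lasserre's hierarchy on compact sets and would apply verbatim to any $\mc{E}$-truncated moment problem with archimedean data, without using the special cone structure. One small correction: the passage after \reff{df:S(hg)} recalls only the polynomial (SOS) form of Putinar, not the moment form you invoke; the moment version is of course standard (it is the dual statement, see e.g.\ Laurent's survey \cite{Lau}), but it is not actually stated in this paper.
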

\begin{proof}
(i) Suppose otherwise $A \in \mathscr{S}^{p,q}$.
Then there exist unit vectors $(u_i,v_i) \in K$ such that
\[
\mathbf{a} = \sum_i c_i [(u_i, v_i)]_{ \mc{E} }
\]
with all $c_i>0$. For all $k\geq 3$, the tms
\[
\widetilde{w} = \sum_i  c_i [(u_i, v_i)]_{ 2k }
\]
is feasible for \reff{min<R,w>},
which is a contradiction.

(ii) When $A$ is not separable, there exists a nonnegative bi-quadratic form
$B_1(x,y)$ such that
$
\langle B_1, A \rangle < 0,
$
by Proposition~\ref{prop:cone:posep}.
For $\eps>0$ small and $B_2 = B_1 + \eps (x^Tx)(y^Ty)$, we still have
$
\langle B_2, A \rangle < 0.
$
Note that $B_2(x,y)$ is strictly positive on $K$.
By Putinar's Positivstellensatz (cf.~\cite{Put}), there exists $k_0$ such that
\[
B_2 \in I_{2k_0}(h) + Q_{k_0}(g).
\]
Clearly, for all $\tau >0$, we have
\[
R - \tau (- B_2)  \in I_{2k_0}(h) + Q_{k_0}(g),
\]
\[
\langle \tau (-B_2), \mathbf{a} \rangle =
\tau \langle -B_2, A \rangle  \, \to \, + \infty
\]
as $\tau \to +\infty$.
This shows that $-B_2$ is an improving direction for \reff{sos:max<p,y>}.
Thus, \reff{sos:max<p,y>} is unbounded from above,
and \reff{min<R,w>} must be infeasible, for $k \geq k_0$.
\end{proof}

Second, we prove the asymptotic convergence of Algorithm~\ref{sdpalg:A-tkmp}.

\begin{theorem}  \label{thm:sep:asym}
Let $A \in \mathscr{S}^{p,q}$ and $\mathbf{a}$ be as in \reff{a=A|E}.
For a generic polynomial $R \in \Sig[x,y]_6$, we have:

\bit

\item [(i)] For all $k \geq 3$, the relaxation
\reff{min<R,w>} has an optimizer $w^{*,k}$.

\item [(ii)] For all $t$ sufficiently large, the truncated sequence
$\{w^{*,k}|_{2t}\}$ is bounded and all its accumulation points are
flat extensions of $\mathbf{a}$.

\eit
\end{theorem}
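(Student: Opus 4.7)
Because $A \in \mathscr{S}^{p,q}$, Proposition~\ref{pr:sep=tmp} delivers a Borel measure $\mu$ on $K$ whose moments up through order $2k$ form a feasible point of \reff{min<R,w>}, so the relaxation is always feasible. To show the feasible set is compact, note that $L_h^{(k)}(w)=0$ with $x^Tx-1 \in h$ yields $\sum_{i=1}^p w_{x_i^2 x^\alpha y^\beta} = w_{x^\alpha y^\beta}$ for every admissible $(\alpha,\beta)$, and similarly for $y$. Combined with $M_k(w)\succeq 0$ (which bounds every off-diagonal entry via diagonal ones) and the normalization $w_1=1$ inherited from $\mathbf{a}$, a straightforward induction yields $|w_{x^\alpha y^\beta}| \le 1$ throughout. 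The continuous linear objective $\langle R, w\rangle$ then attains its infimum on this nonempty compact set.

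\textbf{Strategy for (ii), step 1: extract an infinite moment sequence.} Since the entry bounds are uniform in $k$, for each fixed $t$ the truncated sequence $\{w^{*,k}|_{2t}\}_{k\ge t}$ is bounded. By a Cantor diagonal extraction there is a subsequence $k_j \to \infty$ and a full moment sequence $w^\infty$ such that $w^{*,k_j}|_{2t} \to w^\infty|_{2t}$ for every $t$. Closedness of the PSD cone and of the linear equalities under entrywise limits gives $L_h^{(t)}(w^\infty)=0$, $L_g^{(t)}(w^\infty)\succeq 0$, and $M_t(w^\infty)\succeq 0$ for all $t$, while $w^\infty|_{\mc{E}}=\mathbf{a}$. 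Because $I(h)+Q(g)$ is archimedean (it contains $2-x^Tx-y^Ty$), Putinar's theorem produces a $K$-representing measure $\mu^\infty$ for $w^\infty$.

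\textbf{Strategy for (ii), step 2: exploit the genericity of $R$.} Passing to the limit in the optimal values of \reff{min<R,w>} shows that $\mu^\infty$ minimizes $\mu \mapsto \int R\,d\mu$ over the convex set of all $K$-representing measures of $\mathbf{a}$. For a generic $R\in\Sig[x,y]_6$, the $\mc{A}$-truncated $K$-moment machinery of \cite{Nie-ATKMP} guarantees this optimal measure is unique and supported on finitely many points of $K$; consequently $\rank M_t(w^\infty)$ stabilizes at some $r$ for all $t\ge t_0$, so $w^\infty|_{2t}$ satisfies \reff{cd:flat:omga} whenever $t\ge t_0+1$. Since every accumulation point of $\{w^{*,k}|_{2t}\}$ arises in this way as $w^\infty|_{2t}$, it is a flat extension of $\mathbf{a}$, as desired. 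The main obstacle I anticipate is the genericity step: one must argue that the bad $R$ for which uniqueness or finite support of the optimal measure can fail form a thin (in fact, measure-zero) subset of $\Sig[x,y]_6$, which I would import from \cite{Nie-ATKMP} rather than rebuild from scratch.
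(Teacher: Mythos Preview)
Your proposal is correct and follows the same route as the paper, which simply invokes Proposition~5.1(i) and Theorem~5.3(i) of \cite{Nie-ATKMP} (after observing that $K$ lies in the ball $x^Tx+y^Ty\le 2$); you are essentially unpacking those cited arguments. One small slip in part~(i): the constant entry $w_1$ is \emph{not} directly normalized to $1$ by $w|_{\mc{E}}=\mathbf{a}$, since $\mc{E}$ contains only degree-four monomials; rather, the localizing equalities force $w_1=\sum_{i,j}w_{x_i^2y_j^2}=\mathrm{trace}(A)$, so your compactness argument survives with the entrywise bound $\mathrm{trace}(A)$ in place of~$1$.
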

\begin{proof}
When $A \in \mathscr{S}^{p,q}$, the tms $\mathbf{a} = A|_{\mc{E}}$
admits a representing measure supported in $K$.

(i)
A generic $R \in \Sig[x,y]_6$
lies in the interior of $\Sig[x,y]_6$.
The conclusion can be implied by Proposition~5.1(i) of \cite{Nie-ATKMP}.

(ii) The set is contained in the ball $x^Tx+y^Ty  \leq 2$.
The conclusion can be implied by Theorem~5.3(i) of \cite{Nie-ATKMP}.
\end{proof}

Third, we investigate when Algorithm~\ref{sdpalg:A-tkmp}
converges within finitely many steps, i.e.,
when the stopping condition \reff{cd:flat:omga} is satisfied for some $k$.
Indeed, under some general conditions, the finite convergence occurs.
This is verified in all our numerical experiments.

Let $\mathscr{P}\big( K \big)$ be the cone of all polynomials that
are nonnegative on the set $K$ as in \reff{def:S+pq}.
Consider the optimization problem
\be  \label{max<f,a>:psd}
\max   \quad  \langle f, \mathbf{a} \rangle  \quad
s.t.  \quad  R - f \in \mathscr{P}\big( K \big),
\, f \in \mbox{span} \{ \mc{E} \}.
\ee
Denote by $\mbox{int}(\Sig[x,y]_6)$ the interior of $\Sig[x,y]_6$.

\begin{theorem} \label{thm:fin-cvg}
Let $A \in \mathscr{S}^{p,q}$ and $\mathbf{a}$ be as in \reff{a=A|E}.
Suppose $R \in \mbox{int}( \Sig[x,y]_6)$ and $f^*$ is a maximizer of \reff{max<f,a>:psd}.
Assume that $\hat{f}:=R-f^* \in I(h) + Q(g)$
and $\hat{f}$ has finitely many critical zeros on $x^Tx=y^Ty=1$.
For all $k$ sufficiently large,
if $w^{*,k}$ is a minimizer of \reff{min<R,w>},
then the condition \reff{cd:flat:omga} must be satisfied.
\end{theorem}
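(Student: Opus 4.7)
The plan is to exploit duality between the primal \reff{min<R,w>} and dual \reff{sos:max<p,y>} together with the finite-critical-zero hypothesis, with the key identity being a complementary slackness statement $\langle \hat f, w^{*,k}\rangle = 0$. First, since $\hat f := R - f^* \in I(h)+Q(g)$, I choose $k_0$ large enough that $\hat f \in I_{2k_0}(h)+Q_{k_0}(g)$. Then $f^*$ is a feasible point of \reff{sos:max<p,y>} for every $k\ge k_0$, so the $k$-th dual value is at least $\langle f^*,\mathbf{a}\rangle$. On the primal side, because $A\in\mathscr{S}^{p,q}$, Theorem~\ref{thm:sep:asym}(ii) shows that accumulation points of $\{w^{*,k}|_{2t}\}$ are flat extensions of $\mathbf{a}$ (so they correspond to measures on $K$), and the limit values of the primal are bounded above by the measure-theoretic infimum of $\int R\,d\mu$ subject to $\mu$ representing $\mathbf{a}$. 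Because $f^*$ solves \reff{max<f,a>:psd} and $R \in \mathrm{int}(\Sigma[x,y]_6)$ guarantees closedness/interior properties of the relevant cones, these bounds are all equal to $\langle f^*,\mathbf{a}\rangle$. Weak duality then pinches everything together, and using $w^{*,k}|_{\mc{E}}=\mathbf{a}$ the primal-dual equality rearranges to $\langle \hat f, w^{*,k}\rangle = 0$ for every sufficiently large $k$.

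Next, I would analyze the geometric consequences of this identity. Let $\omega^*$ be any accumulation point of the truncations $w^{*,k}|_{2t}$. By Theorem~\ref{thm:sep:asym}(ii) and Proposition~3.3 of \cite{Nie-ATKMP}, $\omega^*$ admits a representing measure $\mu$ on $K$. The complementary slackness passes to the limit, giving $\int_{K}\hat f\,\mathtt{d}\mu = 0$. Since $\hat f\ge 0$ on $K$, the support of $\mu$ must lie in the zero set $Z := \{\hat f = 0\}\cap K$. Every point of $Z$ is a global minimizer of $\hat f$ on $K$, and so a critical (KKT) point of $\hat f$ on the bi-sphere $x^Tx=y^Ty=1$, because the inequalities $\mathbf{1}_p^Tx\ge 0,\mathbf{1}_q^Ty\ge 0$ cannot be active at a global minimum of $\hat f$ on $K$ without forcing one of the critical conditions on the bi-sphere. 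By the finite critical zero hypothesis, $Z$ is finite, so $\mu$ is a finitely atomic measure with, say, $r$ atoms.

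Finally, because $\mu$ has exactly $r$ atoms, for all $t$ large enough one has $\mathrm{rank}\,M_{t-1}(\omega^*) = \mathrm{rank}\,M_t(\omega^*) = r$, so $\omega^*$ satisfies \reff{cd:flat:omga}. To transfer this from the limit point to the actual iterates, I would invoke Theorem~2.6 of \cite{Nie-ft}, which is designed exactly for this situation: finite convergence of the flat truncation condition follows once one knows that the limit moment problem has only finitely many representing atoms and that the primal and dual values coincide. The main obstacle I anticipate is the strong-duality step; proving rigorously that the primal optimal values converge to $\langle f^*,\mathbf{a}\rangle$ (not just that the dual values do) requires combining the closedness of $\mathscr{P}(K)$ restricted to $\mathrm{span}\{\mc{E}\}$, the interior-point hypothesis $R\in\mathrm{int}(\Sigma[x,y]_6)$, and the asymptotic-flatness result of Theorem~\ref{thm:sep:asym}(ii). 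Once that is done, the complementary-slackness/finite-zero argument delivers \reff{cd:flat:omga} for every large $k$.
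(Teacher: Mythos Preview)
Your overall architecture is right: establish the complementary slackness $\langle \hat f, w^{*,k}\rangle = 0$ for large $k$, then feed this into a flat-truncation theorem from \cite{Nie-ft}. But the way you execute the last step has a genuine gap.

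The problem is your ``transfer from accumulation point to iterate'' move. You show that limits $\omega^*$ of truncations admit finitely-atomic $K$-measures and hence are flat, and then you want to conclude that the iterates $w^{*,k}$ themselves are flat by invoking Theorem~2.6 of \cite{Nie-ft}. That theorem does not say this. It applies to \emph{minimizers of a moment relaxation} of a polynomial optimization problem whose Lasserre hierarchy has finite convergence and finitely many optimizers; it is not a statement that flatness of accumulation points propagates back to the sequence. What you are missing is the observation the paper uses: once $\langle \hat f, w^{*,k}\rangle = 0$, rescale $w^{*,k}$ so that its constant moment equals $1$; then this rescaled $w^{*,k}$ is \emph{itself} a minimizer of the $k$-th moment relaxation of the auxiliary problem $\min\{\hat f(x,y): h(x,y)=0,\ g(x,y)\ge 0\}$. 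Because $\hat f\in I_{2k_1}(h)+Q_{k_1}(g)$ and the minimum of $\hat f$ on $K$ is $0$, that auxiliary hierarchy has finite convergence with zero duality gap for $k\ge k_1$. The finite-critical-zero hypothesis then gives finitely many optimizers, so Theorem~2.2 of \cite{Nie-ft} applies \emph{directly to $w^{*,k}$} and yields the flat truncation \reff{cd:flat:omga}. No limiting argument is needed or valid here.

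Two smaller points. First, you skip the case $(w^{*,k})_0=0$, where rescaling is impossible; the paper handles it separately using $M_k(w^{*,k})\,\mathrm{vec}(1)=0$ and \cite[Lemma~5.7]{Lau} to force $w^{*,k}|_4$ flat. Second, your claim that the sign inequalities $\mathbf{1}_p^Tx\ge 0$, $\mathbf{1}_q^Ty\ge 0$ ``cannot be active at a global minimum without forcing one of the critical conditions on the bi-sphere'' is not justified and is not how the hypothesis is used; once you set up the auxiliary POP as above, the finite-critical-zero assumption feeds directly into Assumption~2.1 of \cite{Nie-ft} without any such argument. Finally, your route to $\langle \hat f, w^{*,k}\rangle=0$ via Theorem~\ref{thm:sep:asym}(ii) is unnecessarily indirect: since $I_{2k}(h)+Q_k(g)\subseteq\mathscr{P}(K)$, the dual value of \reff{sos:max<p,y>} equals $\langle f^*,\mathbf a\rangle$ exactly for all $k\ge k_1$, and Proposition~5.1 of \cite{Nie-ATKMP} gives zero duality gap with \reff{min<R,w>}, so equality holds at every such $k$, not merely in the limit.
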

\begin{proof}
When $R \in \mbox{int}(\Sig[x,y]_6)$,
the feasible set of \reff{sos:max<p,y>} has an interior point.
By Proposition~5.1 of \cite{Nie-ATKMP}, the optimization problems
\reff{min<R,w>} and \reff{sos:max<p,y>} have equal optimal values.
By the assumption, there exists $k_1$ such that
\[
\hat{f} \in I_{2k_1}(h) + Q_{k_1}(g).
\]
Note that
$
I_{2k}(h) + Q_{k}(g)  \subseteq
\mathscr{P}\big( K \big)
$
for all $k$. Hence, for all $k \geq k_1$,
$f^*$ is a maximizer of \reff{sos:max<p,y>}, and
\[
\langle R, w^{*,k} \rangle =  \langle f^*, \mathbf{a} \rangle =
  \langle f^*, w^{*,k}  \rangle.
\]
Then,
\[
\langle \hat{f}, w^{*,k} \rangle = 0
\quad \forall \, k\geq k_1.
\]
Since $\hat{f} \in I_{2k_1}(h)+Q_{k_1}(g)$,
$\hat{f}$ is nonnegative on $K$.
The dual problem of \reff{max<f,a>:psd} is
\be  \label{min<R,w>:sep}
\min   \quad  \langle R, z \rangle  \quad
s.t.  \quad  z|_{\mc{E}} = \mathbf{a}, \quad
\, z \in \mathscr{R}_6(K).
\ee
(The symbol $\mathscr{R}_6(K)$ denotes the closed convex cone
of vectors in $\re^{\mbM[x,y]_6}$ that
admit representing measures supported in $K$.)
The strong duality holds between
\reff{max<f,a>:psd} and \reff{min<R,w>:sep},
because $R \in \mbox{int}( \Sig[x,y]_6)$.
Since $A \in \mathscr{S}^{p,q}$, $\mathbf{a}$ admits a representing
measure supported on $K$, so \reff{min<R,w>:sep}
must have a minimizer (say, $z^*$).
Let $\mu$ be a $K$-representing measure for $z^*$, then,
\[
0 = \langle R, z^* \rangle - \langle f^*, \mathbf{a} \rangle
= \langle \hat{f}, z^* \rangle = \int \hat{f} \mathtt{d} \mu.
\]
This implies that the minimum value of $\hat{f}$ on $K$ is zero.

Consider the polynomial optimization problem:
\be \label{pop:min:hat(f)}
\min_x \quad \hat{f}(x) \quad
s.t.  \quad h(x) = 0, \,  g(x) \geq 0.
\ee
The $k$-th order SOS relaxation for \reff{pop:min:hat(f)} is
\be \label{sosmax:hat(f)-gm}
f_{1,k} := \max  \quad \gamma \quad s.t. \quad
\hat{f}  -  \gamma \in  I_{2k}(h) + Q_k(g).
\ee
Its dual problem is
\be \label{mommin:<hat(f),w>}
\left\{\baray{rl}
f_{2,k} := \min\limits_{ w } & \langle \hat{f}, w \rangle  \\
s.t. &  \langle 1, w \rangle  = 1, M_k(w) \succeq 0, \\
&  L_{h}^{(k)}(w) = 0,  L_{g}^{(k)}(w) \succeq 0.
\earay \right.
\ee
Since $\hat{f} \in I_{2k_1}(h)+Q_{k_1}(g)$, we have
$f_{1,k} \geq 0$ for all $k \geq k_1.$
On the other hand, the minimum value of $\hat{f}$ on $K$ is $0$,
so $f_{1,k} \leq 0$ for all $k$. Hence,
\[
f_{1,k} = 0 \quad \forall \, k \geq k_1.
\]
Lasserre's hierarchy for \reff{pop:min:hat(f)} has finite convergence.
The problem \reff{sosmax:hat(f)-gm} achieves its optimal value for $k \geq k_1$,
because $\hat{f} \in I_{2k_1}(h)+ Q_{k_1}(g)$.

When $(w^{*,k})_{0}=0$, then $vec(1)^T M_k(w^{*,k}) vec(1) =0$,
and $M_k(w^{*,k}) vec(1) =0$ because $M_k(w^{*,k})\succeq 0$.
(Here $vec( )$ denotes the coefficient vector.)
Moreover, we have $M_k(w^{*,k}) vec(z^\af)=0$ for all $|\af| \leq k-1$
(cf.~\cite[Lemma~5.7]{Lau}). So, for $k\geq 3$, $w^{*,k}|_4$ is flat.

When $(w^{*,k})_0>0$, there exists $\tau>0$ such that $(\tau w^{*,k})_0=1$.
Let $w^* =\tau w^{*,k}$.
Then $w^*$ is a minimizer of \reff{mommin:<hat(f),w>},
because $\langle \hat{f}, w^* \rangle = 0$ for all $k\geq k_1$.
By the assumption, $\hat{f}$ has finitely many critical zeros
on $x^Tx=y^Ty=1$, so Assumption~2.1 in \cite{Nie-ft}
for \reff{pop:min:hat(f)} is satisfied.
By Theorem~2.2 of \cite{Nie-ft},
$w^*$ has a flat truncation $w^*|_{2t}$ if $k$ is big enough,
and so is $w^{*,k}$.
\end{proof}

If a polynomial $\sig$ is nonnegative on $K$,
then we often have $\sig \in I(h) + Q(g)$,
under some general conditions (cf.~\cite{Nie-opcd}).
For instance, this is the case if the standard optimality conditions
(constraint qualification, second order sufficiency, strict complementarity)
hold. These optimality conditions are generically satisfiable (cf. \cite{Nie-opcd}).
So, the assumption $\hat{f} \in I(h) + Q(g)$ in
Theorem~\ref{thm:fin-cvg} is often satisfied.
Thus, Algorithm~\ref{sdpalg:A-tkmp} typically has finite convergence.
In all our numerical experiments, the finite convergence always occured.

\subsection{A comparision}

We would like to make a comparison between Algorithms~\ref{alg:posmap}
and \ref{sdpalg:A-tkmp}. By Proposition~\ref{prop:cone:posep},
the positive map cone $\mathscr{P}^{p,q}$
and the separable matrix cone $\mathscr{S}^{p,q}$
are dual to each other. One may expect that their memberships
can be checked in similar ways.
However, these two algorithms have slightly different
properties for checking the memberships of
$\mathscr{S}^{p,q}$ and $\mathscr{P}^{p,q}$.
For {\it every} linear map $\Phi$,
Algorithm~\ref{alg:posmap} is able to determine whether
$\Phi$ belongs to $\mathscr{S}^{p,q}$ or not,
within finitely many steps
(cf.~Theorem~\ref{thm:posmap:cvg}(i)).

In the constrast, for the cone $\mathscr{S}^{p,q}$,
we have a slightly weaker conclusion.
For a matrix $\mathscr{K}^{p,q}$, if $A \not\in \mathscr{S}^{p,q}$,
then Algorithm~\ref{sdpalg:A-tkmp} is able to verify
$A \not\in \mathscr{S}^{p,q}$, within finitely many steps
(cf.~Theorem~\ref{thm:notsep}(ii)).
However, if $A \in \mathscr{S}^{p,q}$, Algorithm~\ref{sdpalg:A-tkmp}
is able to get an S-decomposition asymptotically
(cf.~Theorem~\ref{thm:sep:asym}(ii)).

However, Algorithm~\ref{sdpalg:A-tkmp} has finite convergence,
under some additional conditions (cf.~Theorem~\ref{thm:fin-cvg}).
Interestingly, such conditions are generally satisfied
(see the comments after the proof of Theorem~\ref{thm:fin-cvg}).
In other words, Algorithm~\ref{sdpalg:A-tkmp}
is almost always able to check the membership of
$\mathscr{S}^{p,q}$, within finitely many steps.
This was confirmed in our numerical experiments.

A mathematical reason for the above difference is as follows.
To test positive maps, Algorithms~\ref{alg:posmap}
can make use of the optimality condition \reff{kkt:Bxy}.
However, to test separable matrices, there is no such
a convenient condition to use for Algorithm~\ref{sdpalg:A-tkmp}.
This is why checking separable matrices is often
harder than checking linear positive maps.

At the moment, we are not able to prove that
Algorithm~\ref{sdpalg:A-tkmp} always have finite convergence
for all matrices $A \in \mathscr{K}^{p,q}$.
However, no matrices were found such that
Algorithm~\ref{sdpalg:A-tkmp} fails to terminate
after a finite number of steps.
To the best of the authors' knowledge, this is an open question.

\section{Numerical Examples}
\label{sc:nmexp}
\setcounter{equation}{0}

In this section, we present some examples for checking positivity of linear maps and
separability of matrices. The computation is implemented
in 64-bit MATLAB R2012a, on a Lenovo
Laptop with Intel(R) Core(TM)i7-3520M CPU@2.90GHz and RAM 16.0G.
Algorithms~\ref{alg:posmap} and \ref{sdpalg:A-tkmp} can be implemented
by the software {\tt GloptiPoly~3} \cite{GloPol3}, which calls
the SDP solver {\tt SeDuMi} \cite{sedumi}.
In the computation, the rank of a matrix is numerically evaluated
as the number of its singular values that
is bigger than $10^{-6}$.
For computational results, only four decimal digits are displayed,
for cleanness of the presentation.

\subsection{Checking positivity of linear maps}

\begin{example}\label{example5.1}
(\cite[Example 5.1]{HQ13})
Consider the linear map $\Phi: \mc{S}^2 \to \mc{S}^2$ such that
\[
y^T \Phi(xx^T) y =
\bpm x_1 y_1 \\ x_1y_2 \\ x_2y_1 \\ x_2y_2 \epm^T
\left(\begin{array}{rrrr}
0.0058&-0.1894&-0.2736&0.3415 \\
-0.1894&-0.1859&-0.1585&0.0841\\
-0.2736&-0.1585&-0.0693&-0.0669\\
0.3415&0.0841&-0.0669&0.2494
\end{array}\right)
\bpm x_1 y_1 \\ x_1y_2 \\ x_2y_1 \\ x_2y_2 \epm.
\]
By solving the semidefinite relaxation (\ref{min<B,w>:mom}) with $k=3$,
we get the optimal value of \reff{minB(xy):2sph}
$b_{min} = -0.3157$, as well as a minimizer $(x^*,y^*)$
\[
\big( (0.9830,\,  -0.1835),\,  (0.4632,\,   0.8863) \big).
\]
This linear map is not positive.
The consumed computational time is around $0.8$ second
and the rank of the moment matrix is $1$.
\end{example}

\begin{example} (\cite[\S4]{QDH09})\label{example5.3}
Consider the linear map $\Phi: \mc{S}^2 \to \mc{S}^2$ such that
\[
y^T \Phi(xx^T)y =
x_1^2(y_1^2+4y_1y_2+12y_2^2)+
x_1x_2(4 y_1^2+ 16 y_1y_2+ 2 y_2^2)+
x_2^2( 12y_1^2+ 2y_1y_2 + 2y_2^2 ).
\]
By solving the semidefinite relaxation (\ref{min<B,w>:mom}) with $k=3$,
we get the optimal value of \reff{minB(xy):2sph} $b_{min} = 0.5837$ and
an optimizer
\[
\big( (0.9946,-0.1040), \quad  (0.9946, -0.1040) \big).
\]
This linear map is positive. The computational time is around $0.7$ second
and the rank of the moment matrix is $1$.
\end{example}

\begin{example} \label{example5.4}
(\cite[Example 4.1]{WQZ09})
Consider the linear map $\Phi: \mc{S}^3 \to \mc{S}^3$ such that
\[
y^T \Phi(xx^T) y = \sum_{1 \leq i,j,k, l \leq 3} f_{ijkl} x_iy_j x_k x_l,
\]
where the coefficients $f_{ijkl}$ satisfy the symmetric pattern
\[
f_{ijkl} =  f_{k l i j} = f_{kjil} =  f_{ilkj}
\]
and are given as
\[
 \begin{array}{llccr}
 &f_{1111}=-0.9727;~~ &f_{1112}=0.3169; ~~&f_{1113}=-0.3437;~~&f_{1121}=0.3169;\\
 &f_{1122}=0.6158;&f_{1123}=-0.0184;&f_{1133}=0.5649;&f_{1211}=-0.6332;\\
 &f_{1212}=0.7866;&f_{1213}=0.4257;&f_{1222}=0.0160;&f_{1223}=0.0085;\\
 &f_{1233}=-0.1439;&f_{1311}=0.3350;&f_{1312}=-0.9896;&f_{1313}=-0.4323;\\
 &f_{1322}=-0.6663;&f_{1323}=0.2599;&f_{1333}=0.6162;&f_{2211}=0.7387;\\
 &f_{2212}=0.6873;&f_{2213}=-0.3248;&f_{2222}=0.5160;&f_{2223}=-0.2160;\\
 &f_{2233}=-0.0037;&f_{2311}=-0.7986;&f_{2312}=-0.5988;&f_{2313}=-0.9485;\\
 &f_{2322}=0.0411;&f_{2323}=0.9857;&f_{2333}=-0.7734;&f_{3311}=0.5853;\\
 &f_{3312}=0.5921;
 &f_{3313}=0.6162;
 &f_{3322}=-0.2907;
 &f_{3323}=-0.3881;\\
 &f_{3333}=-0.8526;
\end{array}
\]
By Algorithm~\ref{alg:posmap} with $k=3$, we get the optimal value
of \reff{minB(xy):2sph} $b_{min} = -2.3197$,
and a minimizer $(x^*, y^*)$:
\[
\big( ( -0.3496,\,  -0.4003,\,    0.8471),\,
(-0.5017 ,\,   0.5383 ,\,   0.6772) \big).
\]
This linear map is not positive. The computational time is
around $3$ seconds and the rank of the moment matrix is $1$.
\end{example}

\begin{example}\label{example5.6}
(\cite{LNQY09})
Consider the linear map $\Phi:\mc{S}^3 \to \mc{S}^3$ such that
\[
\baray{c}
y^T \Phi(xx^T) y =x_1^2y_1^2+x_2^2y_2^2+x_3^2y_3^2+2(x_1^2y_2^2+x_2^2y_3^2+x_3^2y_1^2) \\
-2(x_1x_2y_1y_2+x_1x_3y_1y_3+x_2x_3y_2y_3).
\earay
\]
By solving the semidefinite relaxation \reff{min<B,w>:mom} with $k=3$,
we get the optimal value of \reff{minB(xy):2sph} $b_{min}=0$
and $3$ minimizers:
\[
\big( (0, 1, 0), (1, 0, 0) \big) , \quad
\big( (0, 0, 1), (0, 1, 0) \big) , \quad
\big( (1, 0, 0), (0, 0, 1) \big).
\]
This linear map is positive.
The convex relaxation in \cite{LNQY09} is not tight
for checking positivity of this map. The computational time is
around $5$ seconds and the rank of the moment matrix is $3$.
\end{example}

\begin{example}
Consider the linear map $\Phi:\mc{S}^4 \to \mc{S}^4$ such that
\[
y^T \Phi(xx^T) y = \sum_{  1 \leq i \leq k \leq 4 , 1 \leq j \leq l \leq 4  }
\frac{x_iy_j x_k y_l}{i+j+k+l}.
\]
By solving the semidefinite relaxation \reff{min<B,w>:mom} with $k=3$,
we get the optimal value of \reff{minB(xy):2sph} $b_{min} = 0.0175$
and also a minimizer:
\[
\big( (-0.0565,\,   -0.1415,\,    -0.5192,\, 0.8410),\,
(-0.0565,\,    -0.1415,\,    -0.5192,\,     0.8410) \big).
\]
This linear map is positive. The computational time is around
$116$ seconds and the rank of the moment matrix is $1$.
\end{example}

\subsection{Numerical examples of decomposition of separable matrices}

\begin{example}
(\cite[Example~5.1]{HQ13}) \label{example5.10}
Consider the matrix in $\mathcal{K}^{2,2}$:
\[
A=\left[\begin{array}{rrrr}
0.4691& 0.1203& -0.1203& 0.4691\\
0.1203& 0.0309& -0.0309& 0.1203\\
-0.1203& -0.0309& 0.0309& -0.1203\\
0.4691& 0.1203& -0.1203& 0.4691
\end{array}\right].
\]
The semidefinite relaxation (\ref{min<R,w>}) is infeasible for $k=3$,
so $A$ is not separable, i.e., $A\not\in \mathscr{S}^{2,2}$.
The computational time is around $1$ second.
\end{example}

\begin{example}
Consider the matrix $A = A_1 + 2 A_2 - \half A_3$ in $\mathcal{K}^{3,3}$ where
\[
\baray{rcl}
A_1  &=& (e_1e_1^T) \otimes (e_1e_1^T) + (e_2e_2^T) \otimes (e_2e_2^T) +
 (e_3e_3^T) \otimes (e_3e_3^T), \\
A_2  &=& (e_1e_1^T) \otimes (e_2e_2^T) + (e_2e_2^T) \otimes (e_3e_3^T) +
 (e_3e_3^T) \otimes (e_1e_1^T), \\
A_3  &=& (e_1e_2^T+e_2e_1^T) \otimes  (e_1e_2^T+e_2e_1^T) +
(e_1e_3^T+e_3e_1^T) \otimes  (e_1e_3^T+e_3e_1^T) \\
&& +(e_2e_3^T+e_3e_2^T) \otimes  (e_3e_2^T+e_2e_3^T).
\earay
\]
One can check that $\langle A, (xx^T)\otimes (yy^T) \rangle$
is the polynomial in Example~\ref{example5.6}.
The semidefinite relaxation (\ref{min<R,w>}) is infeasible for $k=3$,
so $A$ is not separable, i.e., $A\not\in \mathscr{S}^{3,3}$.
The computational time is around $6$ seconds.
\end{example}

\begin{example}
Consider the matrix $A \in \mathcal{K}^{4,4}$ such that
\[
A_{(i-1)q+j, (k-1)q+l}  =  i+j+k+l
\]
for all $1 \leq i,j,k,l \leq 4$. The semidefinite relaxation (\ref{min<R,w>})
is infeasible for $k=3$,
so $A$ is not separable, i.e., $A\not\in \mathscr{S}^{4,4}$.
The computational time is around $56$ seconds.
\end{example}

\begin{example}
Consider the following matrix $A$ in the space $\mathcal{K}^{2,3}$:
\[
A = \bbm 2  & 1 \\  1 & 3 \ebm \otimes
\bbm 3 & -1 & -1 \\ -1 & 3 & -1 \\ -1 & -1 & 3 \ebm  +
\bbm 1 & -1 \\  -1  & 2 \ebm \otimes
\bbm 4 & 2 & -1 \\  2 & 4 & 2 \\ -1 & 2 & 4 \ebm.
\]
It is separable. By Algorithm~\ref{sdpalg:A-tkmp},
we got an $S$-decomposition
$A = \sum_{i=1}^{7} (a_i a_i^T) \otimes (b_i b_i^T)$, where
$(a_i, b_i)$ are listed column by column as follows:
{\small
\begin{verbatim}
    1.2078    1.0746   -1.0379    1.2993    1.1104   -1.3520    0.5378
    1.3514    0.9620    1.6754   -1.2993    1.6509    1.4560    1.6012
    0.1118    0.5916    0.9481    1.6192    1.6265   -0.6348    0.7998
    1.2220    0.7327    1.0439    0.5969   -0.9708    1.4657    1.0804
   -1.3338   -1.0924   -1.3767    0.6311   -0.6086    1.1818    1.0229
\end{verbatim} \noindent}The
computational time is around $3$ seconds,
and the rank of the moment matrix is $7$.

\end{example}

\begin{example}
Consider the following matrix $A$ in the space $\mathcal{K}^{3,3}$:
\[
A = I_3 \otimes I_3 + (e_1e_1^T) \otimes (e_2e_2^T) +
(e_2e_2^T) \otimes (e_3e_3^T) +  (e_3e_3^T) \otimes (e_1e_1^T).
\]
It is separable. By Algorithm~\ref{sdpalg:A-tkmp}, we got an $S$-decomposition
$A = \sum_{i=1}^{15} (a_ia_i^T) \otimes (b_i b_i^T)$, where
$(a_i,b_i)$ are listed column by column as follows:
{\scriptsize
\begin{verbatim}
    0.3332    0.2690    1.0893    0.6254   -0.7835    0.4637    0.2487    0.7692
    0.3514   -0.8466   -0.2597    0.5751    0.3076    0.6064   -0.6125   -1.2164
   -0.6846    0.5776   -0.8295   -1.2005    0.4759    0.1940    0.3639    0.4472
    0.5247    0.3001    0.2107    0.6835    0.0702    0.1801   -0.4733    0.1722
   -0.1736    0.4736    0.6881   -0.0122    0.5405    0.7896    0.1044    0.3527
   -0.3512   -0.7737   -0.0089    0.3060   -0.6107   -0.9697    0.3689    0.5320

    0.4306    0.5356    0.6862   -0.1301    0.7654    0.8684    0.0262
    0.1275   -0.5190   -0.1012    0.4497    0.1233    0.6715    0.8565
    0.0838    0.4547    0.3925    0.6624    0.7719   -0.1016    0.6082
   -0.2720   -0.5632   -0.7107   -1.1051    0.7074   -0.1978    0.5716
    0.5406    0.5401    0.8189    0.4488    0.5923    0.6990    0.0517
   -0.2686    0.0231   -0.1082    0.6563    0.0799    0.5970    0.7607
\end{verbatim} \noindent}The
computational time is around $7$ seconds,
and the rank of the moment matrix is $15$.

\end{example}

In the following, we consider some randomly generated
separable matrices.

\begin{example}
Consider the following matrix $A$ in the space $\mathcal{K}^{3,4}$:
\[
A = \sum_{i=1}^{5} (u_i u_i^T) \otimes (v_i v_i^T),
\]
where $(u_1,v_1), \ldots, (u_5,v_5)$ are given column by column as
{\scriptsize
\begin{verbatim}
    1.2058    0.9072    1.7107   -0.5053    0.4015
   -0.7758   -0.4990    1.2737   -0.7534    0.7230
   -0.8226   -1.6610    0.0580    1.6702   -1.6482
    0.8679   -0.7584   -2.0588    0.0188   -1.1817
    0.4465    0.6656   -2.5623   -0.0524   -1.0712
    0.4539   -0.1715    0.3518    0.6462    0.6615
    1.1036    0.0342   -1.1263    0.7462    0.5727
\end{verbatim}
\noindent}Clearly,
$A$ is separable. By Algorithm~\ref{sdpalg:A-tkmp}, we got an $S$-decomposition
$A = \sum_{i=1}^{5} (a_i a_i^T) \otimes (b_i b_i^T)$, where
$(a_i,b_i)$ are displayed column by column as follows:
{\scriptsize
\begin{verbatim}
   -0.3476   -0.6388   -1.1734   -0.3886    2.0908
   -0.5183    0.3514    0.7547   -0.6988    1.5567
    1.1491    1.1697    0.8008    1.5939    0.0709
    0.0274    1.0770    0.8920    1.2222    1.6845
   -0.0761   -0.9452    0.4591    1.1077    2.0965
    0.9396    0.2435    0.4662   -0.6839   -0.2878
    1.0850   -0.0486    1.1338   -0.5919    0.9215
\end{verbatim}
\noindent}The computational time
is around $53$ seconds,
and the rank of the moment matrix is $5$. The computed $S$-decomposition
is same as the input one, up to a permutation and scaling of $a_i, b_i$.
That is, there exist real numbers $\tau_{i,j}$,
with $i=1,\ldots, 5$ and $j=1,2$ such that
each $| \tau_{i,1} \tau_{i,2} | = 1$ and
\[
u_i = \tau_{i,1} a_{\sig_i}, \quad v_i = \tau_{i,2} b_{\sig_i}.
\]
In the above, the permutation vector $\sig = (3, 2, 5, 1, 4)$.
\end{example}

\begin{example}
Consider the matrix in the space $\mathcal{K}^{4,4}$:
\[
A = \sum_{i=1}^{6} (u_i u_i^T) \otimes (v_i v_i^T),
\]
where $(u_1,v_1), \ldots, (u_6,v_6)$ are given as as {\scriptsize
\begin{verbatim}
   -1.6002    1.5428   -1.3328   -0.5149    0.1403    0.6616
    1.3773    1.0162   -0.4031    0.8267   -0.4983   -0.2561
   -1.8003   -2.2759   -0.4736    1.1673    1.9594    1.0980
    1.1086    0.9578   -1.5677    0.9943    0.6987   -0.6716
   -0.2947    0.8312   -0.3316   -0.3028   -1.7391   -1.4154
   -0.6738    1.0141    0.0581    0.2061   -0.3607    1.4899
   -0.3373   -0.3853   -1.8798   -1.1994   -0.5071    0.2920
    0.6769    1.1913   -0.9375   -0.9701   -0.2439   -0.0425
\end{verbatim}
\noindent}Clearly, $A$ is separable. By Algorithm~\ref{sdpalg:A-tkmp},
we got an $S$-decomposition
$A = \sum_{i=1}^{6} (a_i a_i^T) \otimes (b_i b_i^T)$,
where $(a_i,b_i)$ are displayed column by column as follows:
{\scriptsize
\begin{verbatim}
    0.9455    0.7853    1.1724    0.1316   -0.4819    1.3463
   -0.8138   -0.3040    0.7722   -0.4675    0.7737    0.4072
    1.0637    1.3033   -1.7295    1.8381    1.0925    0.4784
   -0.6550   -0.7972    0.7278    0.6555    0.9306    1.5836
    0.4988   -1.1925    1.0938    1.8538    0.3235    0.3283
    1.1403    1.2552    1.3345    0.3845   -0.2202   -0.0575
    0.5709    0.2460   -0.5070    0.5406    1.2815    1.8609
   -1.1456   -0.0358    1.5677    0.2600    1.0365    0.9281
\end{verbatim}
\noindent}The computational time is around
$110$ seconds, and the rank of the moment matrix is $6$.
The computed $S$-decomposition
is same as the input one, up to a permutation and scaling
of $a_i, b_i$.
That is, there exist real numbers $\tau_{i,j}$,
with $i=1,\ldots, 6$ and $j=1,2$ such that
each $| \tau_{i,1} \tau_{i,2} | = 1$ and
\[
u_i = \tau_{i,1} a_{\sig_i}, \quad v_i = \tau_{i,2} b_{\sig_i}.
\]
In the above, the permutation vector $\sig = (1, 3, 6, 5, 4, 2)$.
\end{example}

\subsection{Remark}

We would like to discuss the relationship of this paper
to an earlier work on bi-quadratic optimization.
Ling et al. \cite{LNQY09} proposed some convex relaxations
for bi-quadratic optimization, and proved their approximation bounds.
The relaxations in \cite{LNQY09} might not be tight (cf.~Example~\ref{example5.6}),
but provided worst case error bounds.
In contrast, the hierarchy of semidefinite relaxations constructed in this paper
is always tight for checking positive maps,
as well as for solving bi-quadratic optimization.
This is proved in Theorem~\ref{thm:posmap:cvg}.
Moreover, this paper also discusses how to check separability of matrices
and how to compute S-decompositions,
which are not the main subjects of the work \cite{LNQY09}.

\bigskip
\noindent
{\bf Acknowledgement}
The authors would like to thank the associate editor
and two anonymous referees for the useful comments on improving the paper.
Jiawang Nie was partially supported by the NSF grants
DMS-0844775 and DMS-1417985.  Xinzhen Zhang was partially supported by the
National Natural Science Foundation of China (Grant No. 11471242 and 11101303)
and China Scholarship Council.

\end{document}